\newtheorem{theorem}{Theorem}[section]
\newtheorem{lemma}[theorem]{Lemma}
\newtheorem{proposition}[theorem]{Proposition}
\newtheorem{corollary}[theorem]{Corollary}
\theoremstyle{definition}
\newtheorem{example}[theorem]{Example}
\newtheorem{remark}[theorem]{Remark}
\numberwithin{equation}{section}
\newcommand{\be}{\begin{equation}}
\newcommand{\ee}{\end{equation}}
\numberwithin{equation}{section}
\patchcmd{\@settitle}{\uppercasenonmath\@title}{}{}{}
\patchcmd{\@setauthors}{\MakeUppercase}{}{}{}
\begin{document}


\title[Cohomological properties  and Arens regularity of Banach algebras]{Cohomological properties  and Arens regularity of Banach algebras}

\author[Hossein Eghbali Sarai,  Kazem Haghnejad Azar and Ali Jabbari]{Hossein Eghbali Sarai$^1$,  Kazem Haghnejad Azar$^2$$^{*}$ and Ali Jabbari$^3$ }

\address{ $^1$ Department of Mathematics, University of Mohaghegh Ardabili, Ardabil, Iran}
\email{h.eghbali.sarai@gmail.com}

\address{ $^{2}$Department of Mathematics, University of Mohaghegh Ardabili, Ardabil, Iran .}
\email{haghnejad@uma.ac.ir}

\address{ $^{3}$Young Researchers and Elite Club, Islamic Azad University, Ardabil Branch, Ardabil, Iran.}
\email{jabbari\underline{ }al@yahoo.com}

\subjclass[2010]{34B24, 34B27}

\keywords{Arens regularity, topological centers, cohomological group, weakly amenable,   Connes-amenability.\\
\indent Received: dd mmmm yyyy,    Accepted: dd mmmm yyyy.
\\
\indent $^{*}$ Corresponding author}
\maketitle
\hrule width \hsize \kern 1mm


\begin{abstract}
 In this paper, we study some cohomlogical  properties of Banach algebras. For a Banach algebra $A$ and a Banach $A$-bimodule $B$, we investigate the vanishing of the first Hochschild cohomology groups $H^1(A^n,B^m)$ and $H_{w^*}^1(A^n,B^m)$, where $0\leq m,n\leq 3$. For amenable Banach algebra $A$, we show that there are Banach $A$-bimodules $C$, $D$ and  elements $\mathfrak{a}, \mathfrak{b}\in A^{**}$ such that $$Z^1(A,C^*)=\{R_{D^{\prime\prime}(\mathfrak{a})}:~D\in Z^1(A,C^*)\}=\{L_{D^{\prime\prime}(\mathfrak{b})}:~D\in Z^1(A,D^*)\}.$$
where, for every $b\in B$, $L_{b}(a)=ba$ and $R_{b}(a)=a b,$ for every $a\in A$. Moreover, under a condition, we show that if the second transpose of a continuous derivation from the Banach algebra $A$ into $A^*$ i.e., a  continuous linear map from $A^{**}$ into $A^{***}$, is a derivation, then $A$ is Arens regular. Finally, we show that if $A$ is a dual left strongly irregular Banach algebra such that its second dual is amenable, then $A$ is reflexive.
\end{abstract}
\maketitle
\vspace{0.1in}
\hrule width \hsize \kern 1mm

\section{Introduction}
A derivation from a Banach algebra  $A$ into a Banach $A-$bimodule $B$ is a bounded linear mapping $D:A\longrightarrow B$ such that $$D(ab)=aD(b)+D(a)b\qquad\text{for all}\qquad a, b\in A.$$

The space of continuous derivations from $A$ into $B$ is denoted by $Z^1(A,B)$. The easiest example of derivations is the inner derivations, which are given for each $b\in B$ by
$$\delta_b(a)=ab-ba\qquad\text{for all}\qquad a\in A.$$

The space of inner derivations from $A$ into $B$ is denoted by $B^1(A,B)$.
The Banach algebra $A$ is said to be  amenable, when for every Banach $A$-bimodule $B$, the inner derivations are only derivations existing from $A$ into $B^*$, in the other word, $H^1(A,B^*)=Z^1(A,B^*)/ B^1(A,B^*)=\{0\}$ and $A$ is said to be weakly amenable if $H^1(A,A^*)=\{0\}$.

The concept of amenability for a Banach algebra $A$, introduced by Johnson in 1972, see \cite{14}. For a Banach $A$-bimodule $B$, the quotient space $H^1(A,B)$ of all continuous derivations from $A$ into $B$ modulo the subspace of inner derivations is called the first cohomology group of $A$ with coefficients in $B$. Following \cite{ru.1} the Banach algebra $A$ is called super-amenable if $H^1(A,B)=\{0\}$ for every Banach $A$-bimodule $B$ (super-amenable Banach algebras are called contractible, too).  It is clear that if $A$ is super-amenable, then $A$ is  amenable.

In \cite{JKR},  Johnson,  Kadison, and  Ringrose introduced the notion of amenability for von Neumann algebras. The basic concepts, however, make sense for arbitrary dual Banach algebras. But is most commonly associated with  Connes, see \cite{A.Connes}. For this reason, this notion of amenability is called Connes-amenability (the origin of this name seems to be Helemskii, see \cite{A.Ya}).

Let $A$ be a Banach algebra. A Banach $A$-bimodule $X$ is called dual if there is a closed submodule $X_*$ of $X^*$ such that $X = (X_*)^*$ ($X_*$ is called the predual of $X$). A Banach algebra $A$ is called dual if it is dual as a Banach $A$-bimodule.

Let $A$ be a dual Banach algebra. A dual Banach $A$-bimodule $X$ is called normal if, for every $x\in X$, the maps
$$A\longrightarrow X,\hspace{0.2cm} a\mapsto\left\{
\begin{array}{ll}
a\cdot x &  \\
x\cdot a&
\end{array}
\right.$$
are weak$^*$-{continuous} ($w^*$-continuous). The  dual Banach algebra $A$ is called Connes-amenable if, for every dual Banach $A$-bimodule $X$, every $w^*$-continuous derivation $D:A\longrightarrow X$ is inner; or equivalently, ${H}_{w^*}^1(A,X) = \{0\}$ \cite{ru.1}.

The second dual $A^{**}$ of Banach algebra $A$ endowed with the either Arens multiplications is a Banach algebra. The constructions of the two Arens multiplications in $A^{**}$ lead us to the definition of topological centers for $A^{**}$ with respect to both Arens multiplications. The topological centers of Banach algebras, module actions and applications of them  were introduced and discussed in \cite{1, 12a, 13}.   To state our results, we need to fix some notations and recall some definitions.

Let $X,Y,Z$ be normed spaces and $m:X\times Y\rightarrow Z$ be a bounded bilinear mapping. Arens in \cite{1} offers two natural extensions $m^{***}$ and $m^{t***t}$ of $m$ from $X^{**}\times Y^{**}$ into $Z^{**}$, for more information see \cite{eshaghi,  12a, 13}

The mapping $m^{***}$ is the unique extension of $m$ such that $x^{\prime\prime}\rightarrow m^{***}(x^{\prime\prime},y^{\prime\prime})$ from $X^{**}$ into $Z^{**}$ is $weak^*-weak^*$ continuous for every $y^{\prime\prime}\in Y^{**}$, but the mapping $y^{\prime\prime}\rightarrow m^{***}(x^{\prime\prime},y^{\prime\prime})$ is not in general $weak^*-weak^*$ continuous from $Y^{**}$ into $Z^{**}$ unless $x^{\prime\prime}\in X$. Hence the first topological center of $m$ may be defined as follows
$$Z_1(m)=\{x^{\prime\prime}\in X^{**}:~~y^{\prime\prime}\rightarrow m^{***}(x^{\prime\prime},y^{\prime\prime})~~\text{is weak}^*\text{-weak}^*~~\text{continuous}\}.$$

Now, let  $m^t:Y\times X\rightarrow Z$ be the transpose of $m$ defined by $m^t(y,x)=m(x,y)$ for every $x\in X$ and $y\in Y$. Then $m^t$ is a continuous bilinear map from $Y\times X$ to $Z$, and so it may be extended as above to $m^{t***}:Y^{**}\times X^{**}\rightarrow Z^{**}$.
The mapping $m^{t***t}:X^{**}\times Y^{**}\rightarrow Z^{**}$ in general is not equal to $m^{***}$, see \cite{1}, if $m^{***}=m^{t***t}$, then $m$ is called Arens regular. The mapping $y^{\prime\prime}\rightarrow m^{t***t}(x^{\prime\prime},y^{\prime\prime})$ is $weak^*-weak^*$ continuous for every $x^{\prime\prime}\in X^{**}$, but the mapping $x^{\prime\prime}\rightarrow m^{t***t}(x^{\prime\prime},y^{\prime\prime})$ from $X^{**}$ into $Z^{**}$ is not in general  weak$^*$-weak$^*$ continuous for every $y^{\prime\prime}\in Y^{**}$. So we define the second topological center of $m$ as
$$Z_2(m)=\{y^{\prime\prime}\in Y^{**}:~~x^{\prime\prime}\rightarrow m^{t***t}(x^{\prime\prime},y^{\prime\prime})~~\text{is weak}^*\text{-weak}^*~~\text{continuous}\}.$$

It is clear that $m$ is Arens regular if and only if $Z_1(m)=X^{**}$ or $Z_2(m)=Y^{**}$. Arens regularity of $m$ is equivalent to the following
$$\lim_i\lim_j\langle  z^\prime,m(x_i,y_j)\rangle=\lim_j\lim_i\langle  z^\prime,m(x_i,y_j)\rangle,$$
whenever both limits exist for all bounded sequences $(x_i)_i\subseteq X$ , $(y_i)_i\subseteq Y$ and $z^\prime\in Z^*$, see \cite{ Dales}.

The mapping $m$ is left strongly Arens irregular if $Z_1(m)=X$ and $m$ is right strongly Arens irregular if $Z_2(m)=Y$.
The first Arens product is defined as follows in three steps. For $a,b$ in $A$,  $f$ in  $A^{*}$ and $m,n$ in $A^{**}$, the elements $f. a$, $m. f$ of $A^{*}$ and $m. n$ of $A^{**}$ are defined as follows:
$$\langle f. a,b\rangle=\langle f,ab\rangle,\quad\langle m. f,a\rangle=\langle m,f. a\rangle,\quad\langle m. n,f\rangle=\langle m,n. f\rangle.$$

The second Arens product is defined as follows. For $a,b$ in $A$,  $f$ in  $A^{*}$ and $m,n$ in $A^{**}$, the elements $ a\Diamond f$ , $f\Diamond m$ of $A^{*}$ and $m\Diamond n$ of $A^{**}$ are defined by the equalities
$$\langle a\Diamond f,b\rangle=\langle f,ba\rangle,\quad\langle f\Diamond m,a\rangle=\langle m,a\Diamond f\rangle,\quad\langle m\Diamond n,f\rangle=\langle n,f\Diamond m\rangle.$$

The Arens regularity of a normed algebra $A$ is defined to be the Arens regularity of its algebra multiplication when considered as a bilinear mapping $m:A\times A\rightarrow A$.
Let  $B$ be a Banach $A$-bimodule, and let
$$\pi_\ell:~A\times B\longrightarrow B\quad\text{and}\quad\pi_r:~B\times A\longrightarrow B,$$
be the right and left module actions of $A$ on $B$. By above notation, the transpose of $\pi_r$ denoted by $\pi_r^t:A\times B\rightarrow B$. Then
$$\pi_\ell^*:B^*\times~A\longrightarrow B^*\quad\text{and}\quad\pi_r^{t*t}: A\times B^*\longrightarrow B^*.$$
Thus $B^*$ is a left Banach $A$-module and a right Banach $A$-module with respect to the module actions $\pi_r^{t*t}$ and $\pi_\ell^*,$ respectively. The  second dual
$B^{**}$ is a Banach $A^{**}$-bimodule with the following  module actions
$$\pi_\ell^{***}:~A^{**}\times B^{**}\longrightarrow B^{**}\quad\text{and}\quad\pi_r^{***}:~B^{**}\times A^{**}\longrightarrow B^{**},$$
where $A^{**}$ is considered as a Banach algebra with respect to the first Arens product.
Similarly, $B^{**}$ is a Banach $A^{**}$-bimodule with the module actions
$$\pi_\ell^{t***t}:~A^{**}\times B^{**}\longrightarrow B^{**}\quad\text{and}\quad\pi_r^{t***t}:~B^{**}\times A^{**}\longrightarrow B^{**},$$
where $A^{**}$ is considered as a Banach algebra with respect to the second  Arens product. In this way we write $Z(\pi_{\ell})=Z_{B^{**}}(A^{**})$ and $Z(\pi_{r})=Z_{A^{**}}(B^{**})$.

Let  $B$ be a Banach $A$-bimodule. Then we say that  $B$  factors on the left (right) with respect to $A$, if $B=BA~(B=AB)$. Thus $B$ factors on both sides, if $B=BA=AB$.


\section{Weak$^*$-weak$^*$ continuous derivations  }

Let $B$ be a Banach $A$-bimodule. In this section, we study the cohomological properties of Banach algebra $A$ whenever every derivation in $Z^1(A^{**},B^*)$ is weak$^*$-weak$^*$ continuous.

\begin{theorem}\label{3.1a}
	Let $B$ be a Banach $A$-bimodule and let every derivation $D:A^{**}\longrightarrow B^*$ is weak$^*$-weak$^*$ continuous. If $Z^\ell_{B^{**}}(A^{**})=A^{**}$ and $H^1(A,B^*)=\{0\}$,  then  $H^1(A^{**},B^*)=\{0\}$.
\end{theorem}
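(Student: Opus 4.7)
The plan is to reduce matters to the assumption $H^1(A,B^*)=\{0\}$ by restricting a given derivation on $A^{**}$ to $A$, producing an inner derivation on $A^{**}$ that matches it on $A$, and then promoting the equality to all of $A^{**}$ by weak$^*$-density. Concretely, let $D:A^{**}\to B^*$ be any continuous derivation; by hypothesis $D$ is weak$^*$-weak$^*$ continuous. Its restriction $d:=D|_A\colon A\to B^*$ to the canonical copy of $A$ in $A^{**}$ is a continuous derivation of $A$ into the Banach $A$-bimodule $B^*$. Since $H^1(A,B^*)=\{0\}$, there exists $b^*\in B^*$ such that $d(a)=a\cdot b^*-b^*\cdot a$ for every $a\in A$, where the dots denote the original $A$-bimodule actions on $B^*$.

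Next, I would consider the candidate inner derivation $\delta_{b^*}\colon A^{**}\to B^*$ given by $\delta_{b^*}(m)=m\cdot b^*-b^*\cdot m$. Here the $A^{**}$-bimodule actions on $B^*$ are defined, on a test vector $b\in B$, by $(m\cdot b^*)(b)=\langle m,\psi_b\rangle$ and $(b^*\cdot m)(b)=\langle m,\varphi_b\rangle$, where $\psi_b,\varphi_b\in A^*$ are given by $\psi_b(a)=b^*(b\cdot a)$ and $\varphi_b(a)=b^*(a\cdot b)$. The weak$^*$-weak$^*$ continuity of $\delta_{b^*}$ is immediate from these formulas, since for each fixed $b\in B$ both coordinate functions are evaluation against a fixed element of $A^*$. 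The real content of this step is to verify that $\delta_{b^*}$ is actually a derivation from $A^{**}$, equipped with its first Arens product, into $B^*$; this is precisely where the hypothesis $Z^{\ell}_{B^{**}}(A^{**})=A^{**}$ enters, guaranteeing Arens regularity of the left action $\pi_\ell$ and thereby the $A^{**}$-bimodule compatibility of $B^*$ under the first Arens product.

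With that in hand, the map $E:=D-\delta_{b^*}\colon A^{**}\to B^*$ is a continuous derivation, hence weak$^*$-weak$^*$ continuous by hypothesis, and vanishes on $A$ by construction. Since $A$ is weak$^*$-dense in $A^{**}$ by Goldstine's theorem, weak$^*$-continuity forces $E\equiv 0$, so that $D=\delta_{b^*}\in B^1(A^{**},B^*)$. This establishes $H^1(A^{**},B^*)=\{0\}$.

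The main obstacle is the derivation check in the second step: expanding $\delta_{b^*}(nm)-n\cdot\delta_{b^*}(m)-\delta_{b^*}(n)\cdot m$ and cancelling reduces to the associativity identities $(nm)\cdot b^*=n\cdot(m\cdot b^*)$, $b^*\cdot(nm)=(b^*\cdot n)\cdot m$, and $n\cdot(b^*\cdot m)=(n\cdot b^*)\cdot m$ for $m,n\in A^{**}$. These are exactly the manifestations of Arens regularity of $\pi_\ell$ encoded in $Z^{\ell}_{B^{**}}(A^{**})=A^{**}$; without this hypothesis the two $A^{**}$-bimodule structures on $B^{**}$ arising from the two Arens products need not coincide, and $\delta_{b^*}$ can fail to respect the first Arens product.
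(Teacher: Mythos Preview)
Your proposal is correct and follows essentially the same route as the paper. Both arguments restrict $D$ to $A$, obtain $b'\in B^*$ with $D|_A=\delta_{b'}$, and then promote this to all of $A^{**}$ via weak$^*$-density and the assumed weak$^*$--weak$^*$ continuity. The only organizational difference is that the paper picks a net $a_\alpha\stackrel{w^*}{\to}a''$ and computes $D(a'')=w^*\text{-}\lim_\alpha(a_\alpha b'-b'a_\alpha)=a''b'-b'a''$ directly, invoking the hypothesis $Z^\ell_{B^{**}}(A^{**})=A^{**}$ afterwards to check that $b'a''$ actually lies in $B^*$ (viewing it a priori as a functional on $B^{**}$); you instead package the extension as the map $\delta_{b^*}$, observe its weak$^*$--weak$^*$ continuity from the explicit formulas, and locate the role of the topological-center hypothesis in the bimodule/derivation compatibility for $\delta_{b^*}$. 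These are two phrasings of the same verification; the underlying content is identical.
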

\begin{proof}
	Let $D:A^{**}\longrightarrow B^*$ be a derivation. Then $D\mid_A:A\rightarrow B^*$ is a derivation. Since $H^1(A,B^*)=\{0\}$, there exists $b^\prime\in B^*$ such that $D\mid_A=\delta_{b^\prime}$. Suppose that $a^{\prime\prime}\in A^{**}$ and
	$(a_\alpha^{})_\alpha\subseteq A^{}$ such that $a_\alpha^{} \stackrel{w^*} {\longrightarrow} a^{\prime\prime}$ in $A^{**}$. Then
	
	\begin{align*}
	D(a^{\prime\prime})&=w^*-\lim_\alpha D\mid_A(a_\alpha)\\
	&=w^*-\lim_\alpha\delta_{b^\prime}(a_\alpha)\\
	&=w^*-\lim_\alpha(a_\alpha {b^\prime}-{b^\prime}a_\alpha)\\
	&=a^{\prime\prime}{b^\prime}-{b^\prime}a^{\prime\prime}.
	\end{align*}
	
	We now show that $b^\prime a^{\prime\prime}\in B^*$. Assume that  $(b^{\prime\prime}_{\beta})_{\beta}\in B^{**}$ such that $b^{\prime\prime}=w^*-\lim_{\beta}b^{\prime\prime}_{\beta}$. Since $Z^\ell_{B^{**}}(A^{**})=A^{**}$, we have
	$$\langle b^\prime a^{\prime\prime},b^{\prime\prime}_{\beta}\rangle=\langle  a^{\prime\prime}.b^{\prime\prime}_{\beta},b^\prime\rangle\rightarrow
	\langle  a^{\prime\prime}.b^{\prime\prime},b^\prime\rangle=\langle b^\prime a^{\prime\prime},b^{\prime\prime}\rangle.$$
	
	Thus, $b^\prime a^{\prime\prime}\in (B^{**},weak^*)^*=B^*$, and so $H^1(A^{**},B^*)=\{0\}$.
\end{proof}
\begin{corollary}\label{c1}
	Let $A$ be an Arens regular  Banach algebra and let every derivation $D:A^{**}\rightarrow A^*$ is weak$^*$-weak$^*$ continuous. If $A$ is weakly amenable,  then  $H^1(A^{**},A^*)=\{0\}$.
\end{corollary}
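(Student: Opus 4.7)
The plan is to obtain Corollary \ref{c1} as a direct specialization of Theorem \ref{3.1a} with $B=A$. Viewing $A$ as a Banach $A$-bimodule under its own multiplication, one has $B^*=A^*$, and it only remains to verify the three hypotheses of Theorem \ref{3.1a} in this setting.

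First, the hypothesis that every derivation $D:A^{**}\longrightarrow A^*$ is weak$^*$-weak$^*$ continuous is exactly what is assumed in the corollary, so this transfers immediately. Second, weak amenability of $A$ is by definition the statement $H^1(A,A^*)=\{0\}$, which is precisely the fourth hypothesis of Theorem \ref{3.1a}. Third, I would argue that the topological-center condition $Z^\ell_{B^{**}}(A^{**})=A^{**}$ reduces, when $B=A$, to the equality $Z_1(m)=A^{**}$ for $m$ the algebra multiplication of $A$, since in this case the left module action $\pi_\ell$ coincides with $m$; and by the characterization recalled in the introduction, Arens regularity of $A$ is equivalent to $Z_1(m)=A^{**}$. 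Hence Arens regularity delivers the remaining hypothesis.

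Having checked all three conditions, Theorem \ref{3.1a} yields $H^1(A^{**},A^*)=\{0\}$, which is the claim. There is essentially no obstacle: the only point that might require a sentence of justification is the identification of $Z^\ell_{A^{**}}(A^{**})$ with the first topological center $Z_1(m)$ of the multiplication, so that Arens regularity can be invoked. The proof therefore reduces to a brief verification of hypotheses followed by an application of the preceding theorem.
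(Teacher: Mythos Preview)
Your proposal is correct and matches the paper's intended approach: the corollary is an immediate specialization of Theorem~\ref{3.1a} with $B=A$, where Arens regularity supplies the topological-center hypothesis $Z^\ell_{A^{**}}(A^{**})=A^{**}$ and weak amenability supplies $H^1(A,A^*)=\{0\}$. The paper states the corollary without proof for exactly this reason, and your identification of $Z^\ell_{A^{**}}(A^{**})$ with the first topological center of the multiplication is the only point worth noting.
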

By the following result, we show that weak amenability of the Banach algebra $A$ is essential in vanishing of $H^1(A^{**},A^*)$.
\begin{proposition}\label{p1}
	Let $A$ be a  Banach algebra such that is an ideal in $A^{**}$. If $A$ is not  weakly amenable, then $H^1(A^{**},A^*)\neq\{0\}$.
\end{proposition}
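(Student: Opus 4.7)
The plan is to prove the contrapositive by showing that if $H^1(A^{**},A^{*})=\{0\}$, then every continuous derivation $d\colon A\to A^{*}$ is inner, i.e.\ $A$ is weakly amenable. Equivalently, starting from a non-inner derivation $d\colon A\to A^{*}$ (which exists by the hypothesis that $A$ is not weakly amenable), I would lift it to a non-inner derivation $\tilde d\colon A^{**}\to A^{*}$ and conclude $H^{1}(A^{**},A^{*})\neq\{0\}$.

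The ideal hypothesis on $A$ inside $A^{**}$ is what makes the lift possible: it endows $A^{*}$ with a natural Banach $A^{**}$-bimodule structure refining its $A$-bimodule structure, namely
\[
\langle a''\cdot f,a\rangle=\langle f,a\cdot a''\rangle,\qquad\langle f\cdot a'',a\rangle=\langle f,a''\cdot a\rangle,
\]
which is well-defined because $a\cdot a''$ and $a''\cdot a$ belong to $A$ by the ideal property. I would then define the extension $\tilde d\colon A^{**}\to A^{*}$ as the composition $r\circ d^{**}$, where $d^{**}\colon A^{**}\to A^{***}$ is the second transpose of $d$ and $r\colon A^{***}\to A^{*}$ is the restriction dual to the canonical embedding $A\hookrightarrow A^{**}$. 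Since $r$ is a left inverse for the canonical embedding $A^{*}\hookrightarrow A^{***}$, one immediately gets $\tilde d|_{A}=d$.

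The heart of the argument is verifying that $\tilde d$ is indeed a derivation into the Banach $A^{**}$-bimodule $A^{*}$ described above. The method is to pair $\tilde d(a''b'')$ with an arbitrary $a\in A$, use weak$^{*}$-continuity of $d^{**}$ in its first variable to replace $a''$ by a net in $A$, and then apply the ideal property so that $a\cdot a''$ and $a''\cdot a$ land inside $A$, where the original derivation identity $d(xy)=x\cdot d(y)+d(x)\cdot y$ is available. Once this identity is established pointwise on $A$, it yields equality in $A^{*}$ because $A$ is weak$^{*}$-dense in $A^{**}$. Granted this, $\tilde d\in Z^{1}(A^{**},A^{*})$, and the supposed vanishing of $H^{1}(A^{**},A^{*})$ would produce $f\in A^{*}$ with $\tilde d=\delta_{f}$; restricting to $A$ gives $d=\delta_{f}|_{A}$, contradicting that $d$ was chosen non-inner.

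The principal obstacle is the derivation identity for $\tilde d$. In general the second transpose of a derivation is a derivation only with respect to particular Arens-type extensions of the left and right module actions on $A^{***}$, and these extensions need not agree. The ideal hypothesis is precisely the device that causes the discrepancy between the two Arens extensions to collapse once one restricts from $A^{***}$ back to $A^{*}$, so the bookkeeping of which Arens product is in play at each step of the calculation, together with a careful verification that every term survives the final restriction via $r$, is where the main work lies.
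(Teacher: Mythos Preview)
Your approach is correct and takes a genuinely different route from the paper's. The paper's proof is a two-line argument: it posits a bounded homomorphism $\pi\colon A^{**}\to A$, sets $D:=d\circ\pi$ for a non-inner derivation $d\colon A\to A^{*}$, and declares that $D$ is a non-inner element of $Z^{1}(A^{**},A^{*})$. The ideal hypothesis is presumably what is meant to supply $\pi$ (and the compatible $A^{**}$-bimodule structure on $A^{*}$ with respect to which $D$ is a derivation), but neither point is spelled out.

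Your construction instead lifts $d$ to $\tilde d=r\circ d^{**}$ and verifies the derivation identity directly, using the ideal hypothesis in two concrete places: first to define the $A^{**}$-bimodule actions on $A^{*}$, and then to guarantee that products such as $b''\cdot a$ and $a\cdot a''$ fall back into $A$ so that the iterated-limit computation closes up (the inclusion $A\subseteq Z_{1}(A^{**})$ is exactly what makes the inner limit in the term $\lim_\alpha\langle d^{**}(b''),a\,a_\alpha\rangle$ converge to the correct module action). The non-innerness of $\tilde d$ then follows immediately from $\tilde d|_{A}=d$, as you note. This is more work than the paper's argument, but it is fully explicit about the role of the hypothesis and does not rely on producing a bounded algebra retraction $A^{**}\to A$, which in fact need not exist in general (e.g.\ for $A=c_{0}$, an ideal in $c_{0}^{**}=\ell^{\infty}$). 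In that sense your argument is the more robust of the two.
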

\begin{proof}
	Let $d:A\longrightarrow A^*$ be a derivation and $\pi:A^{**}\longrightarrow A$ be a bounded homomorphism. Now; define $D:=d\circ\pi:A^{**}\longrightarrow A^*$. Clearly, $D$ is a bounded derivation which it is not inner. This shows that $H^1(A^{**},A^*)\neq\{0\}$.
\end{proof}
\begin{example}\label{ex1}
	\begin{itemize}
		\item[(i)] Let $K$ be a compact metric space, $d$ be a metric on $K$ and $\alpha\in(0,1]$. The Lipchitz algebra $\mathrm{Lip}_\alpha K$ is the space of complex-valued functions $f$ on $K$ such that
		$$p_\alpha(f)=\sup\left\{\frac{|f(x)-f(y)|}{d(x,y)^\alpha}:x,y\in K, x\neq y\right\}$$
		is finite. A subspace of  $\mathrm{Lip}_\alpha K$ that contains $f\in\mathrm{Lip}_\alpha K$ such that
		$$\frac{|f(x)-f(y)|}{d(x,y)^\alpha}\to 0\quad\text{as}\quad d(x,y)\to0$$
		is denoted by $\mathrm{lip}_\alpha K$. Let $\alpha\in(0,\frac{1}{2})$. Then by \cite[Theorem 4.4.34]{3Dales} or \cite[Theorem 3.8]{BCD}, $\mathrm{lip}_\alpha K$ is Arens regular and by \cite[Theorem 3.10]{BCD} it is weakly amenable. Then by Corollary \ref{c1}, $H^1\left(\left(\mathrm{lip}_\alpha K\right)^{**},\left(\mathrm{lip}_\alpha K\right)^*\right)=\{0\}$.
		\item[(ii)] Let $\omega$ be a weight sequence on $\mathbb{Z}$ such that
		$$\sup\left\{\frac{\omega(m+n)}{\omega(m)\omega(n)}\left(\frac{1+|n|}{1+|m+n|}\right):m,n\in\mathbb{Z}\right\}$$
		is finite. The Beurling algebra $\ell^1(\mathbb{Z},\omega)$ is not weakly amenable \cite[Theorem 2.3]{BCD}. Then by Proposition \ref{p1}, we have
		
		 $H^1\left(\ell^1(\mathbb{Z},\omega)^{**},\ell^\infty(\mathbb{Z},\omega)\right)\neq\{0\}$.
	\end{itemize}
\end{example}
Let $B$  be a dual Banach algebra, with predual  $X$ and suppose that
$$X^\perp=\{x^{\prime\prime\prime}:~x^{\prime\prime\prime}\mid_X=0~~\text{where}~~x^{\prime\prime\prime}\in X^{***}\}=\{b^{\prime\prime}:~b^{\prime\prime}\mid_X=0~~\text{where}~~b^{\prime\prime}\in B^{**}\}.$$

Then the canonical projection $P:X^{***}\longrightarrow X^*$ gives a continuous linear map $P:B^{**}\longrightarrow B$. Thus, we can write the following equality
$$B^{**}=X^{***}=X^*\oplus \ker P=B\oplus X^\perp ,$$
as a direct sum of Banach $A$-bimodules.
\begin{theorem}\label{3.3aa}
	Let $B$ be a Banach $A$-bimodule  such that  every derivation from $A^{**}$ into  $ B$ is weak$^*$-weak continuous and $A^{**}B,BA^{**}\subseteq B$.
	\begin{itemize}
		\item[(i)] If $H^1(A,B)=0$, then $H^1(A^{**},B)=\{0\}$.
		\item[(ii)]  Suppose that $A$ has a left bounded approximate identity (=LBAI), $B$ has a predual $X$ and  $AB^*,~B^*A\subseteq X$. If $H^1(A,B)=0$, then  $H^1(A^{**},B^{**})=\{0\}$.
	\end{itemize}
\end{theorem}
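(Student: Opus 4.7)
The plan follows the template of Theorem~\ref{3.1a}: restrict a derivation from $A^{**}$ to $A$, invoke $H^1(A,B)=\{0\}$ to get an inner derivation there, and push back to $A^{**}$ using the weak$^*$-weak continuity hypothesis together with the preservation $A^{**}B,\,BA^{**}\subseteq B$.

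For part (i), let $D\colon A^{**}\to B$ be a derivation. The restriction $D|_A\colon A\to B$ is a derivation, so $D|_A=\delta_b$ for some $b\in B$. For $a''\in A^{**}$, pick $a_\alpha\in A$ with $a_\alpha\xrightarrow{w^*} a''$; the weak$^*$-weak continuity hypothesis gives $D(a'')=\text{w-}\lim_\alpha(a_\alpha b-b a_\alpha)$. The second adjoints $R_b^{**},L_b^{**}\colon A^{**}\to B^{**}$ of the bounded maps $R_b,L_b\colon A\to B$ (with $R_b(a)=ab$, $L_b(a)=ba$) are weak$^*$-weak$^*$ continuous, so $a_\alpha b\xrightarrow{w^*} a''b$ and $b a_\alpha\xrightarrow{w^*} ba''$ in $B^{**}$; the hypothesis $A^{**}B,BA^{**}\subseteq B$ places both limits in $B$, where the weak$^*$ topology of $B^{**}$ restricts to the weak topology of $B$. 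Thus $D(a'')=a''b-ba''=\delta_b(a'')$, giving $H^1(A^{**},B)=\{0\}$.

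For part (ii), the idea is to reduce to (i) via the decomposition $B^{**}=X^{***}=B\oplus X^\perp$ coming from the canonical projection $P\colon X^{***}\to X^*=B$. The hypothesis $AB^*,\,B^*A\subseteq X$ has two key consequences: $P$ is an $A^{**}$-bimodule homomorphism (so $B$ and $X^\perp$ are $A^{**}$-subbimodules), and $X^\perp$ is a \emph{trivial} $A^{**}$-bimodule. The latter follows from the identity $(a\cdot b'')(b^*)=b''(b^*\cdot a)=0$ for $a\in A$, $b''\in X^\perp$, $b^*\in B^*$ (because $b^*\cdot a\in X$ while $b''|_X=0$), which gives $A\cdot X^\perp=X^\perp\cdot A=\{0\}$ and, by weak$^*$-continuity of the Arens extensions, $A^{**}\cdot X^\perp=X^\perp\cdot A^{**}=\{0\}$. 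Given a derivation $D\colon A^{**}\to B^{**}$, the composition $P\circ D\colon A^{**}\to B$ is a derivation, so part (i) yields $b\in B$ with $P\circ D=\delta_b$; setting $D':=D-\delta_b$ (where $\delta_b(a'')=a''b-ba''\in B$ by the preservation hypothesis) gives $D'\colon A^{**}\to X^\perp$.

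Because $X^\perp$ is a trivial $A^{**}$-bimodule, the derivation identity forces $D'(a''c'')=0$ for all $a'',c''\in A^{**}$. Cohen's factorization applied to the LBAI yields $A=A^2$, so $D'|_A=0$; together with a weak$^*$-cluster point of the LBAI in $A^{**}$ acting as a one-sided identity (in the appropriate Arens product), this propagates to $D'=0$ on all of $A^{**}$, making $D=\delta_b$ inner and proving $H^1(A^{**},B^{**})=\{0\}$. The main obstacle I anticipate is this final step: verifying that $X^\perp$ is trivial under the specific $A^{**}$-action in use (the triviality $X^\perp\cdot A^{**}=\{0\}$ is immediate under the second Arens extension, via weak$^*$-continuity in the $A^{**}$-argument, but requires extra care under the first), and pinning down the correct one-sided identity in $A^{**}$ coming from the LBAI (the weak$^*$-cluster point is a left identity for the second Arens product but not generally for the first); these two issues must be reconciled with the Arens convention defining the module action on $B^{**}$.
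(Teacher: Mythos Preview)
Your proof of (i) is correct and coincides with the paper's (which simply says ``similar to Theorem~\ref{3.1a}'').

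For (ii) you follow the same skeleton as the paper: decompose $B^{**}=B\oplus X^\perp$, dispose of the $B$-summand via (i), and kill the $X^\perp$-summand using the left unit $e''$ of $(A^{**},\Diamond)$ coming from the LBAI. The divergence is in the middle step. You attempt to show that $X^\perp$ is a \emph{trivial} $A^{**}$-bimodule and then deduce $D'(a''c'')=0$. As you yourself flag, this does not go through: from $AB^*,B^*A\subseteq X$ one gets only $A\cdot X^\perp=X^\perp\cdot A=0$, and weak$^*$-continuity of the Arens extensions upgrades this to $A^{**}\cdot X^\perp=0$ (first product) or $X^\perp\cdot A^{**}=0$ (second product), but never both under a single module structure. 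So the triviality claim, and hence the identity $D'(a''c'')=0$, is not available.

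The paper avoids this obstacle by a different mechanism: instead of pushing the module actions from $A$ to $A^{**}$, it pushes the \emph{derivation} from $A$ to $A^{**}$ using its assumed weak$^*$-weak continuity. Concretely, with $a_\alpha\stackrel{w^*}{\to}a''$ and $x_\beta\stackrel{w^*}{\to}x''$ it writes
\[
\langle \widetilde{D}(a''\Diamond x''),b'\rangle
=\lim_\beta\lim_\alpha\bigl(\langle D(a_\alpha),x_\beta b'\rangle+\langle D(x_\beta),b'a_\alpha\rangle\bigr)=0,
\]
because $x_\beta b',\,b'a_\alpha\in X$ and $D(a_\alpha),D(x_\beta)\in X^\perp$; then $\widetilde{D}(x'')=\widetilde{D}(e''\Diamond x'')=0$. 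Note that this computation needs only the $A$-level vanishing $A\cdot X^\perp=X^\perp\cdot A=0$, precisely the part you \emph{do} have; the passage to $A^{**}$ is carried by the continuity of $\widetilde{D}$, not of the module maps. That is the missing ingredient in your outline, and it also resolves your worry about which Arens product to use (the paper works with $\Diamond$ throughout this step). Your appeal to Cohen factorization is unnecessary once this route is taken.
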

\begin{proof}
	(i)  Proof is similar to the proof of Theorem \ref{3.1a}.
	
	(ii) Set $B^{**}=B\oplus X^\perp .$ Then we have
	$$H^1(A^{**},B^{**})=H^1(A^{**},B)\oplus H^1(A^{**},X^\perp).$$
	
	Since $H^1(A,B)=\{0\}$, by  (i), $H^1(A^{**},B)=\{0\}$. Now let $\widetilde{D}\in Z^1(A^{**},X^\perp)$ and we take $D=\widetilde{D}\mid_A$. It is clear that $D\in Z^1(A^{**},X^\perp)$. Assume that $a^{\prime\prime}, x^{\prime\prime}\in A^{**}$ and $(a_\alpha)_\alpha, (x_\beta)_\beta\subseteq A$ such that $a_\alpha^{} \stackrel{w^*} {\rightarrow} a^{\prime\prime}$ and $x_\beta \stackrel{w^*} {\rightarrow} x^{\prime\prime}$ on $A^{**}$. Since $AB^*,~B^*A\subseteq X$, for every $b^ \prime \in B^*$, by using the weak$^*$-weak continuity  of $\widetilde{D}$, we have
	\begin{align*}
	\langle \widetilde{D}(a^{\prime\prime}\Diamond x^{\prime\prime}), b^\prime\rangle&=\lim_\beta\lim_\alpha \langle D(a_\alpha x_\beta), b^\prime\rangle\\
	&=\lim_\beta\lim_\alpha \langle (D(a_\alpha) x_\beta +a_\alpha D(x_\beta)),b^\prime\rangle\\
	&=\lim_\beta\lim_\alpha \langle D(a_\alpha) x_\beta ,b^\prime\rangle+\lim_\beta\lim_\alpha \langle a_\alpha D(x_\beta),b^\prime\rangle\\
	&=\lim_\beta\lim_\alpha \langle D(a_\alpha), x_\beta b^\prime\rangle+\lim_\beta\lim_\alpha \langle D(x_\beta)),b^\prime a_\alpha \rangle\\
	&=0.
	\end{align*}
	
	Since $A$ has a LBAI, $A^{**}$ has a left unit $e^{\prime\prime}$ with respect to the second Arens product \cite[Proposition 2.9.16]{3Dales}. Then  $D(x^{\prime\prime})= D(e^{\prime\prime}\Diamond x^{\prime\prime})=0$, and so $D=0$.
\end{proof}
\begin{example}
	\begin{itemize}
		\item[(i)] Assume that $G$ is a compact group. Then we know that $L^1(G)$ is $M(G)$-bimodule  and $L^1(G)$ is an ideal in the second dual of $M(G)$, $M(G)^{**}$. By  \cite[Corollary 1.2]{12}, we have $H^1(L^1(G),M(G))=\{0\}$. Then by Theorem \ref{3.3aa}, every weak$^*$-weak continuous derivation from $L^1(G)^{**}$ into $M(G)$ is inner.
		\item[(ii)]  We know that  $c_0$ is  a C$^*$-algebra and every C$^*$-algebra is weakly amenable,  so $c_0$ is weakly amenable. Then by Theorem \ref{3.3aa}, every weak$^*$- weak  continuous derivation from $\ell^\infty$ into $\ell^1$ is inner.
	\end{itemize}
\end{example}
\begin{theorem}\label{t2}
	Let $B$ be a Banach $A$-bimodule and $A$ has a $LBAI$. Suppose that $AB^{**},~B^{**}A\subseteq B$ and  every derivation from $A^{**}$ into  $B^*$ is {weak}$^*$-{weak}$^*$ continuous. If $H^1(A,B^*)=\{0\}$,  then  $H^1(A^{**},B^{***})=\{0\}$.
\end{theorem}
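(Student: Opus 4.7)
The plan is to mimic the proof of Theorem \ref{3.3aa}(ii), with $B^{*}$ playing the role of the bimodule $B$ there and $B$ itself playing the role of its predual. Since $B=(B^{*})_{*}$, one has the canonical decomposition
\[
B^{***}=B^{*}\oplus B^{\perp},\qquad B^{\perp}=\{\phi\in B^{***}:\phi|_{B}=0\},
\]
as a direct sum of Banach $A^{**}$-bimodules; invariance of each summand under the $A^{**}$-action should follow from the hypothesis $AB^{**},B^{**}A\subseteq B$ together with standard Arens manipulations. An arbitrary derivation $D:A^{**}\to B^{***}$ therefore splits as $D=D_{1}+\widetilde{D}$ with $D_{1}\in Z^{1}(A^{**},B^{*})$ and $\widetilde{D}\in Z^{1}(A^{**},B^{\perp})$, and it suffices to prove each of these is inner.

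For $D_{1}$, I would invoke Theorem \ref{3.1a}: the weak$^{*}$-weak$^{*}$ continuity of derivations into $B^{*}$ is given, $H^{1}(A,B^{*})=\{0\}$ is given, and the remaining hypothesis $Z^{\ell}_{B^{**}}(A^{**})=A^{**}$ is a consequence of $AB^{**}\subseteq B$ (the iterated-limit criterion for Arens regularity of $\pi_{\ell}$ collapses because each inner limit already lies in $B$, so the two Arens extensions coincide). For $\widetilde{D}$, I would carry out the double-limit computation from the proof of Theorem \ref{3.3aa}(ii): for $a'',x''\in A^{**}$ with nets $(a_{\alpha}),(x_{\beta})\subseteq A$ satisfying $a_{\alpha}\stackrel{w^{*}}{\to}a''$ and $x_{\beta}\stackrel{w^{*}}{\to}x''$, and any $b''\in B^{**}$, expanding the derivation identity gives
\[
\langle\widetilde{D}(a''\Diamond x''),b''\rangle=\lim_{\beta}\lim_{\alpha}\bigl(\langle\widetilde{D}(x_{\beta}),b''\cdot a_{\alpha}\rangle+\langle\widetilde{D}(a_{\alpha}),x_{\beta}\cdot b''\rangle\bigr);
\]
since $AB^{**},B^{**}A\subseteq B$, both $b''\cdot a_{\alpha}$ and $x_{\beta}\cdot b''$ lie in $B$, where the $B^{\perp}$-valued $\widetilde{D}$ vanishes identically, so $\widetilde{D}(a''\Diamond x'')=0$. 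The LBAI of $A$ provides a left unit $e''$ for the second Arens product on $A^{**}$ by \cite[Proposition 2.9.16]{3Dales}, and setting $a''=e''$ yields $\widetilde{D}(x'')=\widetilde{D}(e''\Diamond x'')=0$ for every $x''\in A^{**}$, completing the proof.

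The main technical obstacle is justifying the interchange of the double limit with $\widetilde{D}$ in the display above. In Theorem \ref{3.3aa}(ii) this step was powered by an explicit weak$^{*}$-weak continuity hypothesis on $\widetilde{D}$, but here the stated continuity hypothesis covers only derivations into $B^{*}$ and does not a priori apply to $\widetilde{D}\colon A^{**}\to B^{\perp}$. The expected resolution is that, thanks to $AB^{**},B^{**}A\subseteq B$, the pairings inside the double limit collapse to evaluations of $\widetilde{D}$ against elements of $B$; applying the same reduction first to $a,x\in A$ and using Cohen factorization $A=A^{2}$ (from the LBAI) forces $\widetilde{D}|_{A}=0$, after which the limit exchange becomes essentially vacuous. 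Verifying this cleanly, and in particular verifying that $B^{*}$ and $B^{\perp}$ really are $A^{**}$-subbimodules of $B^{***}$, is where the delicate Arens bookkeeping will take place.
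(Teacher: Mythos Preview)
Your outline matches the paper's proof essentially step for step: the same decomposition $B^{***}=B^{*}\oplus B^{\perp}$, the same appeal to the argument of Theorem~\ref{3.1a} for the $B^{*}$ summand (the paper phrases this as ``similar to Theorem~\ref{3.3aa}(i)'', which in turn refers back to Theorem~\ref{3.1a}), and the same double-limit computation with the left unit $e''\in A^{**}$ arising from the LBAI for the $B^{\perp}$ summand. The paper works directly with $D(e''\Diamond a'')$ rather than the general $D(a''\Diamond x'')$ you wrote, but the calculation is identical once you specialize $a''=e''$.

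Regarding the point you single out as the ``main technical obstacle'': the paper does not actually resolve it. It simply writes ``by weak$^{*}$-weak$^{*}$ continuity of $D$'' for $D\in Z^{1}(A^{**},B^{\perp})$ and passes to the iterated limit, without explaining why the stated hypothesis (which concerns derivations into $B^{*}$, not $B^{\perp}$ or $B^{***}$) should apply here. So your proposal is already at least as careful as the paper at this step. Your suggested Cohen-factorization workaround, however, would not close the gap on its own: deducing $\widetilde{D}|_{A}=0$ from $A=A^{2}$ still leaves the extension from $A$ to $A^{**}$, and that extension is precisely the weak$^{*}$ continuity in question. In short, you have correctly located a soft spot that the paper's own proof glosses over, and neither your patch nor the paper's assertion fully justifies it.
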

\begin{proof} Take $B^{***}=B^*\oplus B^\perp$, where $B^\perp=\{b^{\prime\prime\prime}\in B^{***}:~b^{\prime\prime\prime}\mid_B=0\}$. Then we have
	$$H^1(A^{**},B^{***})=H^1(A^{**},B^*)\oplus H^1(A^{**},B^\perp).$$
	
	Since $H^1(A,B^*)=\{0\}$,  similar to Theorem \ref {3.3aa}(i), we have $H^1(A^{**},B^*)=\{0\}$. It suffices to show that $H^1(A^{**},B^\perp)=0$.  	Let $(e_{\alpha})_{\alpha}\subseteq A$ be a LBAI for $A$ such that  $e_{\alpha} \stackrel{w^*} {\rightarrow}e^{\prime\prime}$ in $A^{**}$ where $e^{\prime\prime}$ is a left unit for  $A^{**}$ with respect to the second Arens product. Let $a^{\prime\prime}\in A^{**}$ and suppose that $(a_{\beta})_{\beta}\subseteq A$ such that  	$a_{\beta} \stackrel{w^*} {\rightarrow}a^{\prime\prime}$ in $A^{**}$. Let $D\in Z^1(A^{**},B^\perp)$. Then for every $b^{\prime\prime}\in B^{**}$, by  {weak}$^*$-{weak}$^*$ continuity  of $D$, we have
	\begin{align*}
	\langle D(a^{\prime\prime}), b^{\prime\prime}\rangle &= \langle D(e^{\prime\prime}\Diamond a^{\prime\prime}), b^{\prime\prime}\rangle\\
	&=\lim_\beta\lim_\alpha \langle (D(e_\alpha a_\beta),b^{\prime\prime}\rangle\\
	&=\lim_\beta\lim_\alpha \langle (D(e_\alpha) a_\beta +e_\alpha D(a_\beta)),b^{\prime\prime}\rangle\\
	&=\lim_\beta\lim_\alpha \langle D(e_\alpha) a_\beta ,b^{\prime\prime}\rangle+\lim_\beta\lim_\alpha \langle e_\alpha D(a_\beta),b^{\prime\prime}\rangle\\
	&=\lim_\beta\lim_\alpha \langle D(e_\alpha), a_\beta b^{\prime\prime}\rangle+\lim_\beta\lim_\alpha \langle  D(a_\beta),b^{\prime\prime}e_\alpha \rangle\\
	&=0.
	\end{align*}
	
	It follows that $D=0$, and so the result  holds.
\end{proof}
It is known that neither the weak amenability of $A$ implies that of $A^{**}$, nor the weak amenability of $A^{**}$ implies that of $A$. The question ``\emph{when the weak amenability of $A^{**}$ implies that of $A$?}'' is investigated in many works; see \cite{BHJ, Dales, EF, GL, f.ghahramani} for more details. We now by Theorem \ref{t2} consider the converse of the above question, i.e., ``\emph{under which conditions the weak amenability of $A$ implies that of $A^{**}$?}'', as follows:
\begin{corollary}\label{c2}
	Assume that $A$ is a Banach algebra with $LBAI$  such that it is two-sided ideal in $A^{**}$  and every derivation  $D:A^{**}\rightarrow A^{***}$ is weak$^*$- weak$^*$ continuous. If $A$ is weakly amenable, then $A^{**}$ is weakly amenable.
\end{corollary}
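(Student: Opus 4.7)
The plan is to specialise Theorem \ref{t2} to the case $B=A$. Under this choice, the conclusion $H^1(A^{**},A^{***})=\{0\}$ of Theorem \ref{t2} is exactly the statement that $A^{**}$ is weakly amenable, so the corollary reduces to verifying that the four hypotheses of Theorem \ref{t2} are met with this choice of $B$.

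Three of these hypotheses are immediate. First, $A$ has a LBAI by assumption. Second, the hypothesis that $A$ is a two-sided ideal in $A^{**}$ is precisely the inclusion requirement $AA^{**},\,A^{**}A\subseteq A$. Third, $H^1(A,A^*)=\{0\}$ is simply the assumed weak amenability of $A$.

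The one subtle point is the requirement that every derivation $D:A^{**}\longrightarrow A^*$ be weak$^*$-weak$^*$ continuous, which must be extracted from the corollary's slightly different hypothesis that every derivation into $A^{***}$ is weak$^*$-weak$^*$ continuous. To bridge this gap I would compose any such $D$ with the canonical isometric embedding $\iota:A^*\hookrightarrow A^{***}$ of $A^*$ into its bidual. A direct computation from the definitions of the Arens actions shows that $\iota$ is an $A^{**}$-bimodule homomorphism; hence $\iota\circ D$ is again a derivation and therefore weak$^*$-weak$^*$ continuous by hypothesis. Since the weak$^*$ topology on $A^{***}$ is induced by $A^{**}$ while that on $A^*$ is induced by the smaller space $A\subseteq A^{**}$, weak$^*$-convergence in $A^{***}$ of a net that in fact lies in $A^*$ automatically forces weak$^*$-convergence in $A^*$. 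Consequently $D$ itself is weak$^*$-weak$^*$ continuous, and the last hypothesis of Theorem \ref{t2} is verified.

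The main (and really rather mild) obstacle is the bimodule-compatibility check for the embedding $\iota$; once that is in hand, Theorem \ref{t2} applies verbatim with $B=A$ to yield $H^1(A^{**},A^{***})=\{0\}$, i.e.\ the weak amenability of $A^{**}$.
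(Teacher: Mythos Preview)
Your approach is exactly the one the paper intends: the corollary is stated without proof immediately after Theorem~\ref{t2}, and is meant to be the specialisation $B=A$, which you carry out correctly. Your observation that the derivation-continuity hypothesis in the corollary (into $A^{***}$) is formally stronger than the one in Theorem~\ref{t2} (into $A^*$), together with your bridging argument via the canonical embedding, is a useful piece of care that the paper omits; note, incidentally, that the proof of Theorem~\ref{t2} itself silently uses continuity of derivations landing in $B^\perp\subseteq B^{***}$, so the corollary's stronger hypothesis is in fact what is really needed throughout.
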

\begin{example}
	Assume that $G$ is a locally compact group. We know that $L^1(G)$ is weakly amenable Banach algebra, see \cite{Johnson}. Then by Corollary \ref{c2},  every weak$^*$- weak$^*$ continuous derivation from $L^1(G)^{**}$ into $L^1(G)^{***}$ is inner.
\end{example}

\begin{theorem}
	Let  $A$  be an amenable and Arens regular Banach algebra.  If for any normal Banach $A$-bimodule $B$ with predual $X$, we have $AB^*,~B^*A\subseteq X$, then $H^1_{w^*}(A^{**},B^{**})=\{0\}$.
\end{theorem}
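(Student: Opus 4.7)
The plan is to follow the restriction–extension strategy used throughout this section. Given any $w^*$-continuous derivation $D\colon A^{**}\to B^{**}$, its restriction $D|_A\colon A\to B^{**}$ is a bounded derivation into the dual Banach $A$-bimodule $B^{**}$. Amenability of $A$ gives $H^1(A,B^{**})=\{0\}$, so $D|_A=\delta_{b^{\prime\prime}}$ for some $b^{\prime\prime}\in B^{**}$, i.e.\ $D(a)=ab^{\prime\prime}-b^{\prime\prime}a$ for every $a\in A$. The scheme is then to verify that the formula $\delta_{b^{\prime\prime}}(a^{\prime\prime})=a^{\prime\prime}b^{\prime\prime}-b^{\prime\prime}a^{\prime\prime}$ defines a $w^*$-$w^*$ continuous map $A^{**}\to B^{**}$, so that $D$ and $\delta_{b^{\prime\prime}}$ are two $w^*$-continuous maps agreeing on the $w^*$-dense subspace $A$ and must coincide, making $D$ inner.

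The $w^*$-continuity of $a^{\prime\prime}\mapsto a^{\prime\prime}b^{\prime\prime}$ is automatic: this is exactly the first-variable $w^*$-continuity built into the extension $\pi_\ell^{***}$, and requires no extra hypothesis. The whole force of the theorem is concentrated in the $w^*$-continuity of $a^{\prime\prime}\mapsto b^{\prime\prime}a^{\prime\prime}$, where the assumptions $AB^*,\,B^*A\subseteq X$ enter. My strategy is to derive, from these factorization conditions together with the Arens regularity of $A$, the Arens regularity of the right module action $\pi_r\colon B\times A\to B$, that is $\pi_r^{***}=\pi_r^{t***t}$. Once this identification is in place, $a^{\prime\prime}\mapsto b^{\prime\prime}a^{\prime\prime}$ is automatically $w^*$-continuous, since the $t***t$-extension is $w^*$-continuous in its second variable by construction.

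To establish the Arens regularity of $\pi_r$, I would verify the iterated-limit criterion recalled in the introduction: for bounded nets $(b_i)\subseteq B$, $(a_j)\subseteq A$ and $b^\prime\in B^*$, write $\langle b^\prime,b_ia_j\rangle=\langle b^\prime\cdot a_j,b_i\rangle$; the hypothesis $B^*A\subseteq X$ places $b^\prime\cdot a_j\in X$, so the pairing now takes place in the canonical duality $X\times X^*=X\times B$. Along this duality both iterated limits can be computed, and Arens regularity of $A$ ensures that any $w^*$-accumulation point of $(a_j)$ in $A^{**}$ behaves unambiguously with respect to both Arens products, so the two iterated limits agree. A symmetric argument using $AB^*\subseteq X$ takes care of $\pi_\ell$. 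The main technical obstacle of the whole argument is exactly this step — turning the factorization conditions into Arens regularity of the module actions and book-keeping the resulting Arens extensions so that they match the $A^{**}$-bimodule structure used in the defining formula for $\delta_{b^{\prime\prime}}$. Once this is cleared, $\delta_{b^{\prime\prime}}$ is $w^*$-$w^*$ continuous, coincides with $D$ on the $w^*$-dense subspace $A$, and therefore equals $D$ throughout $A^{**}$, proving $H^1_{w^*}(A^{**},B^{**})=\{0\}$.
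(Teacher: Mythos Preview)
Your overall restriction--extension strategy is reasonable, but the argument as written has a real gap at exactly the point you flag as ``the main technical obstacle'': you do not actually prove that the right module action $\pi_r\colon B\times A\to B$ is Arens regular, and your justification for the iterated-limit criterion does not go through. You write that ``Arens regularity of $A$ ensures that any $w^*$-accumulation point of $(a_j)$ in $A^{**}$ behaves unambiguously with respect to both Arens products, so the two iterated limits agree.'' But Arens regularity of $A$ is a statement about the bilinear map $A\times A\to A$; it says nothing a priori about $B\times A\to B$. Concretely, to get $w^*$-continuity of $a^{\prime\prime}\mapsto b^{\prime\prime}a^{\prime\prime}$ you would need, for every $b^\prime\in B^*$, that $a^{\prime\prime}\mapsto\langle b^{\prime\prime},\pi_r^{**}(a^{\prime\prime},b^\prime)\rangle$ is $w^*$-continuous. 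One can check that $a^{\prime\prime}\mapsto \pi_r^{**}(a^{\prime\prime},b^\prime)$ is $w^*$-to-$w^*$ continuous into $B^*$, but $b^{\prime\prime}\in B^{**}$ is not $w^*$-continuous on $B^*$, so the composite need not be $w^*$-continuous. The hypothesis $AB^*\subseteq X$ only tells you $ab^\prime\in X$ for $a\in A$, not $a^{\prime\prime}b^\prime\in X$ for $a^{\prime\prime}\in A^{**}$, so it does not close this gap. You also never use the normality of $B$, which is a hint that something is missing.

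The paper's proof takes a quite different route. It invokes the fact that amenability plus Arens regularity of $A$ makes $A^{**}$ Connes-amenable; since $B$ is a normal dual $A^{**}$-bimodule this immediately gives $H^1_{w^*}(A^{**},B)=\{0\}$. Then the decomposition $B^{**}=B\oplus X^\perp$ reduces the problem to showing $H^1_{w^*}(A^{**},X^\perp)=\{0\}$. Here the hypotheses $AB^*,\,B^*A\subseteq X$ are used in a very concrete way: if $\widetilde D\in Z^1_{w^*}(A^{**},X^\perp)$ and $a,x\in A$, $b^\prime\in B^*$, then $\langle\widetilde D(a),xb^\prime\rangle=0$ and $\langle\widetilde D(x),b^\prime a\rangle=0$ because $xb^\prime,\,b^\prime a\in X$ while $\widetilde D$ takes values in $X^\perp$; combined with a one-sided unit in $A^{**}$ coming from the BAI of the amenable algebra $A$, this forces $\widetilde D=0$ (exactly as in the proof of Theorem~\ref{3.3aa}(ii)). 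In short, the paper uses the factorization conditions to annihilate the $X^\perp$-component directly, rather than to establish Arens regularity of the module actions, and it uses Arens regularity of $A$ only through the Connes-amenability of $A^{**}$.
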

\begin{proof}
	If the Banach algebra $A$ is amenable and Arens regular, then $A^{**}$ is Connes-amenable and the converse holds whenever $A$ is an ideal in $A^{**}$, too \cite[Theorem 4.4.8]{ru.1}. Thus  $H^1_{w^*}(A^{**},B)=\{0\}$ and by the argument before Theorem \ref {3.3aa}, we have $B^{**}=B\oplus X^\bot$. These imply that  $H^1_{w^*}(A^{**},B^{**})=H^1_{w^*}(A^{**},X^\bot)$. It is known that every amenable Banach algebra possesses a BAI, so by a similar argument in the proof of Theorem \ref {3.3aa}(ii), we obtain that $H^1_{w^*}(A^{**},X^\bot)=\{0\}$.
\end{proof}
\begin{proposition}\label{p2}
	Suppose that $A$ is  an amenable  Banach algebra. If for every Banach $A$-bimodule $B$, we have $AB^{**},~B^{**}A\subseteq B$, then
	\begin{align*}
	H^1_{w^*}(A^{**},B^{***})=\{0\}.
	\end{align*}
\end{proposition}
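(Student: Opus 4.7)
The plan is to follow the approach of Theorem \ref{t2}, splitting the problem via the canonical decomposition $B^{***} = B^* \oplus B^\perp$, where $B^\perp = \{b^{\prime\prime\prime} \in B^{***} : b^{\prime\prime\prime}|_B = 0\}$. I would first verify that this is a decomposition of Banach $A^{**}$-bimodules and that, moreover, applying the hypothesis to the $A$-bimodule $B^*$ (which gives $A \cdot B^{***}, B^{***} \cdot A \subseteq B^*$) together with $B^* \cap B^\perp = \{0\}$ forces $A \cdot B^\perp = B^\perp \cdot A = \{0\}$ inside $B^{***}$. The claim then reduces to proving $H^1_{w^*}(A^{**}, B^*) = \{0\}$ and $H^1_{w^*}(A^{**}, B^\perp) = \{0\}$ separately.

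For the $B^\perp$ summand, take $D \in Z^1_{w^*}(A^{**}, B^\perp)$. Because $A$ acts trivially on $B^\perp$, the derivation identity forces $D(ab) = 0$ for all $a, b \in A$. Amenability provides a bounded approximate identity, so Cohen's factorization theorem yields $A^2 = A$, giving $D|_A = 0$. The $w^*$-density of $A$ in $A^{**}$ combined with the $w^*$-$w^*$ continuity of $D$ then forces $D(a^{\prime\prime}) = 0$ for every $a^{\prime\prime} \in A^{**}$.

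For the $B^*$ summand, let $D \in Z^1_{w^*}(A^{**}, B^*)$. Since $A$ is amenable and $B^*$ is a dual Banach $A$-bimodule, $H^1(A, B^*) = \{0\}$, so there exists $b^* \in B^*$ with $D|_A = \delta_{b^*}|_A$. Here $B^*$ inherits a natural dual Banach $A^{**}$-bimodule structure (with predual $B$) whose left and right $A^{**}$-actions are $w^*$-$w^*$ continuous; consequently $\delta_{b^*}$ lifts to a $w^*$-$w^*$ continuous derivation $A^{**} \to B^* \hookrightarrow B^{***}$. Then $\widetilde D = D - \delta_{b^*}$ is $w^*$-$w^*$ continuous and vanishes on the $w^*$-dense subset $A$, so $\widetilde D = 0$ and $D$ is inner.

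The hardest part will be confirming that the decomposition $B^{***} = B^* \oplus B^\perp$ is genuinely $A^{**}$-invariant (not merely $A$-invariant) and that the embedding $B^* \hookrightarrow B^{***}$ intertwines the pertinent $A^{**}$-actions in a $w^*$-$w^*$ continuous fashion. Without Arens regularity such identifications are delicate, since the two Arens products disagree on $B^{**}$; the hypothesis $AB^{**}, B^{**}A \subseteq B$ is precisely what rescues the argument, by forcing the Arens extensions of the $A$-action on $B^{**}$ to land in $B$ and thereby eliminating the ambiguity between the two products at the points that matter.
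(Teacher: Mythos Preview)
Your proposal follows essentially the same route as the paper. The paper's own proof is a one--line reference to the method of Theorem~\ref{3.3aa}(ii) (and, implicitly, Theorem~\ref{t2}): split $B^{***}=B^*\oplus B^\perp$, use amenability to kill the $B^*$-component, and use the BAI together with the hypothesis $AB^{**},B^{**}A\subseteq B$ to show any derivation into $B^\perp$ vanishes. Your write--up matches this template, with two minor but legitimate variations: for the $B^\perp$ piece you exploit the universal quantifier in the hypothesis by applying it to the bimodule $B^*$ (obtaining $A\cdot B^{***}\subseteq B^*$, hence $A\cdot B^\perp=0$), and then use Cohen factorisation $A=A^2$ plus $w^*$-density, whereas the paper computes directly with a bounded approximate identity and the second Arens product; for the $B^*$ piece you argue that $\delta_{b^*}$ extends $w^*$-continuously and invoke density, whereas the paper's reference leads back to the limit computation in Theorem~\ref{3.1a}. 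Your caveat in the last paragraph---that the genuine work lies in checking the decomposition is $A^{**}$-invariant and that the $A^{**}$-actions on $B^*$ behave well---is apt, and the paper glosses over exactly the same point; neither approach says more than the other on this.
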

\begin{proof}
	By applying a similar argument in the proof of Theorem \ref {3.3aa}(ii), we obtain the desire.
\end{proof}
\begin{corollary}\label{c3}
	Assume that $A$ is a weakly amenable Banach algebra with a LBAI. If $A$ is an ideal in $A^{**}$, it follows that
	$$H^1_{w^*}(A^{**},A^{***})=\{0\}.$$
\end{corollary}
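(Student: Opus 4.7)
The plan is to specialize the proof of Theorem \ref{t2} to $B=A$, noting that the blanket $w^*$-$w^*$ continuity hypothesis on derivations required there is automatic in the present setting, since we are only testing $w^*$-continuous derivations. I would begin with the decomposition $A^{***}=A^*\oplus A^\perp$ of $A^{**}$-bimodules (as recalled just before Theorem \ref{3.3aa}), where $A^\perp=\{\phi\in A^{***}:\phi|_A=0\}$; note that the ideal hypothesis $AA^{**},A^{**}A\subseteq A$ is exactly what makes $A^\perp$ a sub-$A^{**}$-bimodule of $A^{***}$. Since the projection $A^{***}\to A^*$ is the restriction map and is therefore $w^*$-$w^*$ continuous, any $w^*$-continuous derivation $D:A^{**}\to A^{***}$ splits as $D=D_1+D_2$ with $D_1:A^{**}\to A^*$ and $D_2:A^{**}\to A^\perp$ both $w^*$-continuous derivations, reducing the problem to showing that both pieces are inner.

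For the first summand, the restriction $D_1|_A:A\to A^*$ is a bounded derivation, hence inner by weak amenability: $D_1|_A=\delta_{f}$ for some $f\in A^*$. Invoking the $w^*$-density of $A$ in $A^{**}$, the $w^*$-continuity of $D_1$, and the (automatic) $w^*$-$w^*$ continuity of the actions $a''\mapsto a''f$ and $a''\mapsto f a''$ from $A^{**}$ into $A^*$ (since $\langle a''f,b\rangle=\langle a'',fb\rangle$ and $\langle f a'',b\rangle=\langle a'',bf\rangle$ are $w^*$-continuous in $a''$ for each $b\in A$), I would conclude $D_1=\delta_{f}$ on all of $A^{**}$, mimicking the argument of Theorem \ref{3.1a}.

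For the second summand, I would repeat the LBAI/unit argument used in Theorem \ref{t2}. Let $(e_\alpha)\subseteq A$ be a LBAI with $e_\alpha\stackrel{w^*}{\to}e''$, where $e''$ is a left unit for $A^{**}$ under the second Arens product, and let $(a_\beta)\subseteq A$ with $a_\beta\stackrel{w^*}{\to}a''$. Writing $a''=e''\Diamond a''$ and applying $w^*$-$w^*$ continuity of $D_2$ together with the Leibniz rule yields
$$\langle D_2(a''),b''\rangle=\lim_\beta\lim_\alpha\bigl(\langle D_2(e_\alpha),a_\beta b''\rangle+\langle D_2(a_\beta),b''e_\alpha\rangle\bigr).$$
The ideal hypothesis forces $a_\beta b'',\,b''e_\alpha\in A$, and since $D_2$ takes values in $A^\perp$, each pairing vanishes, giving $D_2=0$. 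The only nontrivial conceptual point is the compatibility of the decomposition $A^{***}=A^*\oplus A^\perp$ with $w^*$-continuity; once this is checked, the rest is a verbatim specialization of the proof of Theorem \ref{t2} to $B=A$, with the ideal condition playing the role of the module inclusion $AB^{**},\,B^{**}A\subseteq B$.
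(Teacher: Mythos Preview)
Your approach is exactly the one the paper intends. The corollary is stated without proof, but it is meant to be read as the $B=A$ instance of the method behind Theorem \ref{t2} and Proposition \ref{p2}: decompose $A^{***}=A^*\oplus A^\perp$, use weak amenability to kill the $A^*$-component, and use the LBAI together with the ideal hypothesis $AA^{**},A^{**}A\subseteq A$ to annihilate the $A^\perp$-component. Your observation that the blanket ``every derivation is $w^*$-$w^*$ continuous'' hypothesis of Theorem \ref{t2} becomes automatic here, because one is computing $H^1_{w^*}$, is precisely the point that turns that theorem's argument into this corollary.
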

\begin{example}
	Assume that $G$ is a compact group. It is known that $L^1(G)$ has a BAI and  is a two-sided ideal in $L^1(G)^{**}$.  We know that $L^1(G)$ is weakly amenable, hence by Corollary \ref{c3},  $$H^1_{w^*}(L^\infty(G)^*,L^\infty(G)^{**})=\{0\}.$$
\end{example}
\begin{proposition}\label{3.1}
	Let $A$ be a Banach algebra such that $A$ is an ideal in $A^{**}$ and $A^*$ factors. Then $A$ is amenable if and only if $A^{**}$ is Connes-amenable.
\end{proposition}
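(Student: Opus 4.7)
The plan is to reduce the equivalence to the theorem of Runde invoked earlier in the paper, namely that if $A$ is amenable and Arens regular then $A^{**}$ is Connes-amenable, with the converse valid whenever $A$ is an ideal in $A^{**}$. The ideal hypothesis is in force, so what remains is to extract Arens regularity of $A$ from the hypotheses that $A^*$ factors and that $A$ is an ideal in $A^{**}$.

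I establish Arens regularity via the iterated-limit criterion recalled in the introduction. Fix $f \in A^*$ and bounded nets $(a_i), (b_j) \subseteq A$ with $a_i \stackrel{w^*}{\to} m$ and $b_j \stackrel{w^*}{\to} n$ in $A^{**}$, and suppose that both iterated limits of $\langle f, a_i b_j \rangle$ exist. Since $A^*$ factors, write $f = a \cdot g$ for some $a \in A$ and $g \in A^*$, so that $\langle f, a_i b_j \rangle = \langle g, a_i (b_j a) \rangle$. The net $(b_j a)$ lies in $A$ and $w^*$-converges in $A^{**}$ to $n \cdot a$, which by the ideal hypothesis lies in $A$; restricted to $A$ this $w^*$-convergence is weak convergence. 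Using that the bounded linear maps $x \mapsto x \cdot g$ and $x \mapsto g \cdot x$ from $A$ into $A^*$ are weakly continuous, a direct computation shows that both iterated limits $\lim_i \lim_j \langle g, a_i (b_j a) \rangle$ and $\lim_j \lim_i \langle g, a_i (b_j a) \rangle$ equal $\langle m, (n \cdot a) \cdot g \rangle$. Hence $A$ is Arens regular.

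With Arens regularity in hand, Runde's theorem gives the forward implication immediately, and, combined with the ideal hypothesis, the reverse implication as well. The main obstacle is the Arens regularity step; the two hypotheses fit together precisely for it, since factoring reduces the pairing $\langle f, a_i b_j \rangle$ to one involving the single product $b_j a$, while the ideal condition keeps the $w^*$-limit of that product inside $A$, which is exactly what is needed to convert $w^*$-convergence into weak convergence in $A$ and make the two iterated limits coincide.
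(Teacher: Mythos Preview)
Your proof is correct and follows the same strategy as the paper: establish that $A$ is Arens regular from the hypotheses that $A$ is an ideal in $A^{**}$ and $A^*$ factors, then invoke Runde's theorem \cite[Theorem 4.4]{run.1}. The only difference is that the paper dispatches the Arens regularity step by citing \cite[Corollary 2.8]{BHJ}(i), whereas you supply a self-contained iterated-limit argument for it; your argument is essentially a proof of that cited result.
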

\begin{proof}
	By \cite[Corollary 2.8]{BHJ}(i), $A$ is Arens regular. Then by   \cite[Theorem 4.4]{run.1}, the proof completes.
\end{proof}
A Banach space $A$ is called weakly sequentially complete if  every weakly Cauchy sequence in $A$ has a weak limit in $A$.
\begin{theorem}
	Let $A$ be an Arens regular dual Banach algebra such that $A^{*}$ is weakly sequentially complete (WSC). If $H^1_{w^*}(A^{**},A^{***})=\{0\}$, then $H^1_{w^{*}}(A,A^{*})=\{0\}$.
\end{theorem}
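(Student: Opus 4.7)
The plan is to lift $D$ to the bidual, exploit the hypothesis there, and descend. Given $D \in Z^1_{w^*}(A,A^*)$, its second transpose $D^{**}:A^{**}\to A^{***}$ is bounded and, as an adjoint, is automatically $w^*$-$w^*$-continuous in the natural weak-star topologies. Arens regularity of $A$ makes $A^{**}$ a dual Banach algebra with predual $A^*$ whose multiplication is separately $w^*$-continuous in both variables, so the hypothesis on $H^1_{w^*}(A^{**},A^{***})$ is in force.

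The main technical step is to show that $D^{**}$ is a derivation. For $m,n \in A^{**}$, I would choose nets $a_\alpha \stackrel{w^*}{\to} m$ and $b_\beta \stackrel{w^*}{\to} n$ from $A$, and expand
\begin{align*}
D^{**}(mn) &= w^*\text{-}\lim_\alpha w^*\text{-}\lim_\beta D(a_\alpha b_\beta) \\
&= w^*\text{-}\lim_\alpha w^*\text{-}\lim_\beta \bigl(D(a_\alpha)b_\beta + a_\alpha D(b_\beta)\bigr) \\
&= D^{**}(m)n + m D^{**}(n).
\end{align*}
Arens regularity ensures the two iterated $w^*$-limits of $a_\alpha b_\beta$ commute, and the weak sequential completeness of $A^*$ is used to control the cross-term limits $D(a_\alpha)b_\beta,\, a_\alpha D(b_\beta)\in A^*\subset A^{***}$ so that the limit interchanges are valid and the pieces can be reassembled into $D^{**}(m)n$ and $m D^{**}(n)$ via the separate $w^*$-continuity of the actions.

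Once $D^{**}$ is a $w^*$-$w^*$-continuous derivation, the hypothesis supplies $\Phi \in A^{***}$ with $D^{**}=\delta_\Phi$. I would then invoke the canonical direct sum $A^{***} = A^* \oplus A^{\perp}$ of Banach $A$-bimodules, where $A^{\perp}=\{\psi \in A^{***}:\psi|_A=0\}$, and write $\Phi = \Phi_0 + \Phi_1$ accordingly. A direct check shows $A^\perp$ is an $A$-sub-bimodule (for $\psi \in A^\perp$ and $a,b\in A$, $\langle a\psi,b\rangle = \langle \psi,ba\rangle = 0$ since $ba \in A$), so $\delta_{\Phi_1}(a)\in A^\perp$ for every $a\in A$. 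But $D(a) = D^{**}(a) = \delta_{\Phi_0}(a) + \delta_{\Phi_1}(a)$ lies in $A^*$ and $A^*\cap A^{\perp}=\{0\}$; therefore $\delta_{\Phi_1}(a)=0$ and $D=\delta_{\Phi_0}$ with $\Phi_0 \in A^*$, giving $H^1_{w^*}(A,A^*)=\{0\}$.

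The principal obstacle is the second step: the double-limit manipulation and the verification of the Leibniz identity for $D^{**}$ is the only non-routine calculation, and is the precise point at which both Arens regularity (for commuting iterated $w^*$-limits) and weak sequential completeness of $A^*$ (for keeping the cross-term limits inside $A^*$ so that separate $w^*$-continuity of the $A^{**}$-module actions on $A^{***}$ can be applied) enter the argument.
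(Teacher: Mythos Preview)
Your overall architecture---lift $D$ to $D''$, verify it is a $w^*$-continuous derivation, apply the hypothesis, and project back via $A^{***}=A^*\oplus A^\perp$---matches the paper's, and your descent step (projecting $\Phi$ to $\Phi_0\in A^*$) is exactly the paper's use of $f=E^*(a''')$, where $E:A\to A^{**}$ is the canonical embedding.

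The gap is in the central step. Your net computation for the Leibniz rule requires, at the decisive moment, that $\xi\mapsto \xi\cdot n$ (right action of a fixed $n\in A^{**}$ on $A^{***}$) be $w^*$-continuous, so that from $D(a_\alpha)\stackrel{w^*}{\to}D^{**}(m)$ in $A^{***}$ you can pass to $D(a_\alpha)\cdot n\stackrel{w^*}{\to}D^{**}(m)\cdot n$. That continuity is a topological-centre condition and is not automatic; your phrase ``separate $w^*$-continuity of the actions'' is precisely what needs proof. Your invocation of WSC of $A^*$ at this point is not a mechanism either: weak sequential completeness is a statement about weak Cauchy \emph{sequences} and does not by itself justify interchanging iterated $w^*$-limits of nets.

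The paper uses WSC in a different and more concrete way. It argues that WSC of $A^*$ forces the derivation $D:A\to A^*$ to be weakly compact, whence (Gantmacher) $D''(A^{**})\subseteq A^*$. Arens regularity then gives $A^*\cdot A^{**}\subseteq A^*$, so that $D''(A^{**})\cdot A^{**}\subseteq A^*$; this is exactly the hypothesis of the Dales--Rodr\'{\i}guez-Palacios--Velasco criterion (their Theorem~7.1), which then delivers that $D'':A^{**}\to A^{***}$ is a derivation. In short, the role of WSC is not to legitimize limit interchanges directly, but to force the range of $D''$ into $A^*$, after which a cited structural theorem yields the Leibniz identity. Your sketch is on the right track but does not isolate this mechanism.
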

\begin{proof}
	Let $D:A\longrightarrow A^{*}$ be a $w^*$-continuous derivation. Since $A^{*}$ is WSC, every derivation $D:A\longrightarrow A^{*}$ is weakly compact. Then by   \cite[Theorem 6.5.5]{conway}, we have  $D''(A^{**})\subseteq A^{*}$  and hence, by Arens regularity of $A$, $A^{*}$ is an $A^{**}$-submodule of $(A^{**})^{*}$ and $D''(A^{**}).A^{**}\subseteq A^{*}.A^{**}\subseteq A^{*}$. Then by \cite[Theorem 7.1]{Dales}, $D'':A^{**}\longrightarrow A^{***}$ is a $w^{*}$-continuous derivation. Thus, there exists $a^{\prime\prime\prime}\in A^{***}$ such that $D''(F)=F.a^{\prime\prime\prime}-a^{\prime\prime\prime}.F$, for each $F\in A^{**}$. Now, let $E:A\longrightarrow A^{**}$ be the canonical map and set $f=E^{*}(a^{\prime\prime\prime})$, then $D(a)=a.f-f.a$, for all $a\in A$. This means that $D$ is an inner $w^*$-continuous derivation. Thus the proof follows.
\end{proof}
In the following, we extend the \cite[Corollary 2.8]{BHJ}(i)  to the general case as follows:
\begin{lemma}\label{3.3a}
	If $A^{(2n)}$ is a two-sided ideal in $A^{(2n+2)}$ and $A^{(2n+1)}$ factors, then $A^{(2n)}$ is Arens regular, where $n\in\mathbb{N}\cup\{0\}$.
\end{lemma}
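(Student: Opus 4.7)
The plan is to reduce the statement directly to the case $n=0$, which is precisely the cited result \cite[Corollary 2.8]{BHJ}(i), by applying that result not to $A$ but to the Banach algebra $B := A^{(2n)}$ equipped with its (first) Arens product. This is a purely notational reduction: no new analytic content is needed beyond the base case.

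Concretely, I would proceed in the following steps. First, observe that $B = A^{(2n)}$ is a Banach algebra, so that we may speak of its Arens regularity in the sense of the paper. Second, identify $B^{*} = A^{(2n+1)}$ as a Banach $B$-bimodule and $B^{**} = A^{(2n+2)}$ as a Banach algebra (again under the first Arens product built iteratively), with bimodule actions of $B$ on $B^{**}$ given by restriction of the product on $A^{(2n+2)}$ to $A^{(2n)} \times A^{(2n+2)}$ and $A^{(2n+2)} \times A^{(2n)}$. Third, translate the hypotheses of the lemma into the hypotheses of \cite[Corollary 2.8]{BHJ}(i) applied to $B$: the statement that $A^{(2n)}$ is a two-sided ideal in $A^{(2n+2)}$ becomes exactly the statement that $B$ is an ideal in $B^{**}$, and the statement that $A^{(2n+1)}$ factors becomes that $B^{*}$ factors. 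Fourth, invoke \cite[Corollary 2.8]{BHJ}(i) on $B$ to conclude that $B = A^{(2n)}$ is Arens regular, which is the desired conclusion.

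The only real point that requires care is the compatibility of structures at different levels of the dual chain. One must check that when $B = A^{(2n)}$ is viewed as a Banach algebra under its Arens product and one then forms $B^{**}$ via the first Arens product of $B$, the resulting Banach algebra is the same as $A^{(2n+2)}$ viewed with its (iterated) first Arens product, and that the induced $B$-bimodule actions coincide with the restrictions of multiplication in $A^{(2n+2)}$. This is well-known and implicit in the very notation $A^{(2n+2)} = (A^{(2n)})^{**}$, and it is precisely what makes the hypotheses of the lemma unambiguous. Once this identification is recorded, nothing else is to be verified.

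Thus the main (and only mild) obstacle is bookkeeping: keeping track of which level of the dual one sits at, and confirming that the ideal and factorization assumptions, phrased at level $2n$ and $2n+1$ respectively, are exactly the inputs required by the $n=0$ case when the role of $A$ is played by $A^{(2n)}$. No induction is needed — a single application of the base case suffices.
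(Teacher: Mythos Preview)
Your proposal is correct and matches the paper's approach exactly: the paper states the lemma without proof, introducing it merely as an extension of \cite[Corollary 2.8]{BHJ}(i) to the general case, and your substitution $B=A^{(2n)}$ is precisely the intended one-line reduction. The bookkeeping you flag (that the iterated first Arens product on $A^{(2n+2)}$ agrees with the first Arens product of $B^{**}$) is indeed routine and implicit in the paper's notation.
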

\begin{theorem}\label{3.5}
	Let $A$ be a Banach algebra such that $A^{**}$ is an ideal in $A^{****}$ and $A^{***}$ factors. If $A$ is weakly amenable, then 	$H^1_{w^{*}}(A^{**},A^{***})=\{0\}$.
\end{theorem}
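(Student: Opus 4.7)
The plan is to apply Lemma \ref{3.3a} to upgrade the structural hypotheses to Arens regularity, and then to imitate the decomposition and iterated-limit arguments developed for Theorem \ref{t2} and Corollary \ref{c3}. First, I would apply Lemma \ref{3.3a} with $n=1$: since $A^{**}$ is a two-sided ideal in $A^{****}$ and $A^{***}$ factors, we conclude that $A^{**}$ is Arens regular. Consequently, the closed subalgebra $A\subseteq A^{**}$ is Arens regular, $A^{**}$ is a dual Banach algebra with predual $A^{*}$, and $A^{*}$ is a sub-$A^{**}$-bimodule of $A^{***}$.

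This yields a direct sum decomposition of Banach $A^{**}$-bimodules
$$A^{***}=A^{*}\oplus A^{\perp},\qquad A^{\perp}=\{\varphi\in A^{***}:\varphi|_{A}=0\},$$
so that $H^{1}_{w^{*}}(A^{**},A^{***})=H^{1}_{w^{*}}(A^{**},A^{*})\oplus H^{1}_{w^{*}}(A^{**},A^{\perp})$. For the first summand, let $D:A^{**}\to A^{*}$ be a $w^{*}$-continuous derivation; its restriction $D|_{A}:A\to A^{*}$ is a derivation, so by weak amenability of $A$ one has $D|_{A}=\delta_{a^{*}}$ for some $a^{*}\in A^{*}$. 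Since $A$ is $w^{*}$-dense in $A^{**}$ and both $D$ and $\delta_{a^{*}}$ are $w^{*}$-$w^{*}$-continuous on $A^{**}$ (the latter by Arens regularity of $A$), they must agree on all of $A^{**}$. Hence $H^{1}_{w^{*}}(A^{**},A^{*})=\{0\}$.

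For the second summand, I would follow the iterated-limit template used in the proof of Theorem \ref{t2}. The factoring of $A^{***}$, combined with Arens regularity of $A^{**}$, should furnish the approximate-identity structure in $A^{**}$ needed to carry out the calculation: given a $w^{*}$-continuous derivation $D:A^{**}\to A^{\perp}$, one writes a general $a^{\prime\prime}\in A^{**}$ as an iterated $w^{*}$-limit of products $e_{\alpha}\Diamond a_{\beta}$, applies the derivation identity, and uses the vanishing of elements of $A^{\perp}$ on $A$ to collapse every pairing in the iterated limit. The main obstacle is precisely this last step: extracting an approximate identity from the factoring of $A^{***}$, and lifting the argument of Theorem \ref{t2} one level up in the duality chain. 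Once that bookkeeping is completed, we obtain $D\equiv 0$, and the proof concludes.
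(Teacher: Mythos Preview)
Your approach diverges from the paper's, and the divergence introduces a real gap. The paper does \emph{not} decompose $A^{***}$. Instead, after invoking Lemma~\ref{3.3a} to obtain Arens regularity of $A^{**}$, it verifies directly that $A^{***}$ is a \emph{normal} $A^{**}$-bimodule, so that every inner derivation $\delta_{b'''}\colon A^{**}\to A^{***}$ is $w^{*}$--$w^{*}$ continuous. Given a $w^{*}$-continuous $D\colon A^{**}\to A^{***}$, the paper forms $\bar D\colon A\to A^{*}$ by $\bar D(a)=D(\hat a)\big|_{A}$, uses weak amenability of $A$ to write $\bar D=\delta_{f}$ for some $f\in A^{*}$, embeds $f$ canonically as $b'''\in A^{***}$, and then identifies $D$ with $\delta_{b'''}$ via $w^{*}$-density of $A$ in $A^{**}$ and $w^{*}$-continuity of both maps.

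The gap in your plan is the claimed splitting $A^{***}=A^{*}\oplus A^{\perp}$ \emph{as $A^{**}$-bimodules}. Arens regularity of $A$ does guarantee that $A^{*}$ sits inside $A^{***}$ as an $A^{**}$-submodule, but it does \emph{not} force the Banach-space complement $A^{\perp}=\{\varphi\in A^{***}:\varphi|_{A}=0\}$ to be an $A^{**}$-submodule. For $\varphi\in A^{\perp}$ and $a''\in A^{**}$ one has $(a''\cdot\varphi)(\hat a)=\varphi(\hat a\cdot a'')$, and there is no reason for this to vanish unless $\hat a\cdot a''\in\hat A$, i.e.\ unless $A$ is an ideal in $A^{**}$. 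That hypothesis is built into Corollary~\ref{c3}, but it is absent here: the assumptions of Theorem~\ref{3.5} concern $A^{**}\subseteq A^{****}$, not $A\subseteq A^{**}$. Without the bimodule splitting you cannot write $H^{1}_{w^{*}}(A^{**},A^{***})$ as the direct sum of the two cohomology groups.

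The same missing hypothesis also undermines your proposed treatment of the $A^{\perp}$ piece. Running the iterated-limit template of Theorem~\ref{t2} one level up, the pairings you would need to annihilate are $\langle D(e_{\alpha}),\,a_{\beta}\, b''\rangle$ and $\langle D(a_{\beta}),\,b''\, e_{\alpha}\rangle$ with $b''\in A^{**}$; these vanish only if $a_{\beta}\, b''$ and $b''\, e_{\alpha}$ lie in $A$, which again requires $A$ to be an ideal in $A^{**}$. Moreover, factoring of $A^{***}$ does not by itself furnish a bounded approximate identity for $A$ (or for $A^{**}$), so even the opening move of that template---expressing $a''$ as an iterated $w^{*}$-limit of products against an approximate identity drawn from $A$---is not justified by the stated hypotheses. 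The paper sidesteps all of this by working globally with $A^{***}$ and exploiting its normality rather than attempting a submodule decomposition.
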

\begin{proof}
	Lemma \ref{3.3a} implies that $A^{**}$ is Arens regular. Now, let $D:A^{**}\longrightarrow A^{***}$ be a weak$^*$- weak$^*$-continuous derivation. First, we prove that $A^{***}$ is a normal Banach $A^{**}$-bimodule. Let $(a^{\prime\prime}_{\alpha})_{\alpha}$ be a net in $A^{**}$ and $a^{\prime\prime\prime}\in A^{***}$. Then, by Arens regularity of $A^{**}$, for every $b^{\prime\prime}\in A^{**}$ we have
	\begin{align*}
	\langle(w^*-\lim_{\alpha}a^{\prime\prime}_{\alpha}).a^{\prime\prime\prime},b^{\prime\prime}\rangle&=\langle a^{\prime\prime\prime},b^{\prime\prime} .(w^*-\lim_{\alpha}a^{\prime\prime}_{\alpha})\rangle\\&=\lim_{\alpha}\langle a^{\prime\prime\prime},b^{\prime\prime}.a^{\prime\prime}_{\alpha}\rangle\\&=\lim_{\alpha}\langle a^{\prime\prime}_{\alpha}.a^{\prime\prime\prime},b^{\prime\prime}\rangle\\&=\langle w^*-\lim_{\alpha}(a^{\prime\prime}_{\alpha}.a^{\prime\prime\prime}),b^{\prime\prime}\rangle.
	\end{align*}
	
	Moreover,
	\begin{align*}
	\langle a^{\prime\prime\prime}.(w^*-\lim_{\alpha}a^{\prime\prime}_{\alpha}),b^{\prime\prime}\rangle&=\langle a^{\prime\prime\prime},w^*-\lim_{\alpha}a^{\prime\prime}_{\alpha}.b^{\prime\prime}\rangle\\&=\lim_{\alpha}\langle a^{\prime\prime\prime},a^{\prime\prime}_{\alpha},b^{\prime\prime}\rangle\\&=\lim_{\alpha}\langle a^{\prime\prime\prime}.a^{\prime\prime}_{\alpha},b^{\prime\prime}\rangle\\&=\langle w^*-\lim_{\alpha}(a^{\prime\prime\prime}.a^{\prime\prime}_{\alpha}),b^{\prime\prime}\rangle.
	\end{align*}
	
	Hence, the mapping $a^{\prime\prime}\mapsto a^{\prime\prime}.a^{\prime\prime\prime}$ and $a^{\prime\prime}\mapsto a^{\prime\prime\prime}.a^{\prime\prime}$ are weak$^*$-weak$^*$-continuous from $A^{**}$ into $ A^{***}$. Thus, $A^{***}$ is a normal Banach $A^{**}$-bimodule. For each $a\in A$, we define $\bar{D}:A\longrightarrow A^*$ by
	$$\bar{D}(a)=D(\widehat{a})\mid _A,$$
	where $\widehat{a}\in A^{**}$ with  $\widehat{a}(a^\prime )=a^\prime (a)$, for all $a\in A$.	
	As the following equalities   $\bar{D}$ is a continuous derivation from $A$ into $A^*$.
	\begin{center}
		$\bar{D}(ab)=D(\widehat{ab})=D(\widehat{a}.\widehat{b})=a.D(\widehat{b})+D(\widehat{a}).b=a.\bar{D}(b)+\bar{D}(a).b$,
	\end{center}
	where   $a,b\in A$.
	By weak amenability of $A$, we have $\bar{D}$ is inner. Then there exist $a^{\prime\prime\prime}\in A^{***}$ such that
	\begin{center}
		$D(\widehat a)=\bar{D}(a)=a.a^{\prime\prime\prime}\mid_A-
		a^{\prime\prime\prime}\mid_A.a=\widehat{a}.a^{\prime\prime\prime}\mid_A-a^{\prime\prime\prime}\mid_A.\widehat{a}.$
	\end{center}
	
	We consider the canonical mapping $E:A^*\longrightarrow A^{***}$. Then there exists $b^{\prime\prime\prime}\in A^{***}$ such that $E(a^{\prime\prime\prime}\mid_A)=b^{\prime\prime\prime}$. So
	\begin{center}
		$D(\widehat{a})=\widehat{a}.b^{\prime\prime\prime}-b^{\prime\prime\prime}.\widehat{a}.$
	\end{center}
	
	Then  $D$ is inner. It follows that  $H^1_{w^{*}}(A^{**},A^{***})=\{0\}$.
\end{proof}
\begin{corollary}
	Let $A^{(2n+2)}$ be a two sided ideal in $A^{(2n+4)}$ and $A^{(2n+3)}$ factors. If $A^{(2n)}$ is weakly amenable, then $H^1_{w^*}(A^{(2n+2)},A^{(2n+3)})=\{0\}$.
\end{corollary}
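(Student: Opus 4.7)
The plan is to obtain the corollary as a direct instance of Theorem \ref{3.5} by renaming the base algebra. Concretely, I would set $B := A^{(2n)}$ and read the three hypotheses in the corollary as the three hypotheses of Theorem \ref{3.5} applied to $B$ instead of $A$: since $B^{**}=A^{(2n+2)}$, $B^{***}=A^{(2n+3)}$ and $B^{****}=A^{(2n+4)}$, the assumption that $A^{(2n+2)}$ is a two-sided ideal in $A^{(2n+4)}$ becomes ``$B^{**}$ is a two-sided ideal in $B^{****}$'', the assumption that $A^{(2n+3)}$ factors becomes ``$B^{***}$ factors'', and the weak amenability of $A^{(2n)}$ is precisely the weak amenability of $B$.

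Having matched hypotheses, I would invoke Theorem \ref{3.5} for the Banach algebra $B$. Its conclusion reads $H^1_{w^*}(B^{**},B^{***})=\{0\}$, which upon translating back is exactly $H^1_{w^*}(A^{(2n+2)},A^{(2n+3)})=\{0\}$, as required.

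There is no real obstacle here because the result is obtained by pure substitution; the only thing worth checking is the bookkeeping of the iterated duals, namely the identifications $B^{(k)}=A^{(2n+k)}$ for $k=2,3,4$, which are immediate from the definition of $B$. Consequently the proof reduces to a single sentence applying Theorem \ref{3.5} to $A^{(2n)}$.
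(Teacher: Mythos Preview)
Your proposal is correct and is essentially the paper's own argument: the paper's proof reads ``Apply Lemma \ref{3.3a} and Theorem \ref{3.5}'', which amounts precisely to your substitution $B=A^{(2n)}$ and invocation of Theorem \ref{3.5}. The separate mention of Lemma \ref{3.3a} is redundant, since that lemma is already used inside the proof of Theorem \ref{3.5}; your single-sentence reduction captures the whole argument.
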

\begin{proof}
	Apply Lemma  $\ref{3.3a}$ and Theorem $\ref{3.5}$.
\end{proof}
Weak$^*$-continuous derivations from dual Banach algebras into their ideals are studied in \cite{EJ}.
\begin{remark}
	If $M$ is subspace of $A$ and $N$  is subspace of $A^*$, then $M^{\perp}=\{x^*\in X^*: \langle x^*,x\rangle=0,\quad \forall x\in M\}$ and $^{\perp}N=\{x\in A: \langle x^*,x\rangle=0,\quad\forall x^{*}\in N\}$. If $A$ is a dual Banach algebra and $I$ is $w^*-$closed ideal of $A$, then $I$ is dual with predual $I_*=\frac{A_{*}}{^{\perp}I}$ that $(I_{*})^{*}=(\frac{A_{*}}{^{\perp}{I}})^*=(^{\perp}I)^{\perp}=I$ and $I^*=\frac{A^*}{I^{\perp}}$, see \cite{conway}.
\end{remark}
\begin{proposition}\label{pro1}
	Let $A$ be a dual Banach algebra and $I$ be an arbitrary $w^*$-closed ideal of $A$ such that $H^1(A,I^{**})=\{0\}$. Then $H^1_{w^*}(A,I)=\{0\}$.
\end{proposition}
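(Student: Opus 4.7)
The plan is to lift the $w^*$-continuous derivation $D\colon A\to I$ to a derivation into $I^{**}$, apply the hypothesis there, and then project back down to $I$. Let $\iota\colon I\hookrightarrow I^{**}$ be the canonical embedding; since $\iota$ is an $A$-bimodule homomorphism, $\widetilde D:=\iota\circ D\colon A\to I^{**}$ is a bounded derivation. The hypothesis $H^1(A,I^{**})=\{0\}$ then yields $\xi\in I^{**}$ with
\[
\widetilde D(a)=a\cdot\xi-\xi\cdot a\qquad(a\in A).
\]

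Next I would invoke the Remark preceding the proposition: because $I$ is a $w^*$-closed ideal of the dual Banach algebra $A$, the space $I$ is itself a dual Banach space with predual $I_*=A_*/{}^{\perp}I\subseteq I^{*}$. Restriction to $I_*$ then defines a canonical projection $P\colon I^{**}\to I$ satisfying $P\circ\iota=\mathrm{id}_I$. The key step is to show that $P$ is an $A$-bimodule map. Since $A$ is a dual Banach algebra, multiplication in $A$ is separately $w^{*}$-continuous, and the $w^{*}$-closedness of $I$ implies that for each $a\in A$ the left and right multiplications $L_a,R_a\colon I\to I$ are $w^{*}$-continuous; consequently, if $\phi\in I_*$ is viewed as a $w^{*}$-continuous functional on $I$, then $\phi\cdot a$ and $a\cdot\phi$, being compositions of $w^{*}$-continuous maps, also belong to $I_*$. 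A direct computation on $\phi\in I_*$ then gives
\[
\langle P(a\cdot\xi),\phi\rangle=\langle\xi,\phi\cdot a\rangle=\langle P(\xi),\phi\cdot a\rangle=\langle a\cdot P(\xi),\phi\rangle,
\]
and symmetrically on the right, so $P$ intertwines both $A$-actions.

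Applying $P$ to the equation for $\widetilde D$ and using $P\circ\iota=\mathrm{id}_I$, we obtain $D(a)=a\cdot x-x\cdot a$ for $x:=P(\xi)\in I$; thus $D=\delta_x$ is inner (and is automatically $w^{*}$-continuous by the separate $w^{*}$-continuity of multiplication in $A$). The main technical obstacle is precisely the verification that $P$ is an $A$-bimodule map: this is where the dual Banach algebra structure on $A$ together with the $w^{*}$-closedness of $I$ are used in an essential way, since without them the predual $I_*$ need not be stable under the natural $A$-actions on $I^{*}$, and the descent argument would break down.
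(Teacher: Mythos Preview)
Your proof is correct and follows essentially the same route as the paper's: lift $D$ to $I^{**}$ via the canonical embedding, use the hypothesis to write it as an inner derivation $\delta_\xi$ there, and then project back to $I$ via the decomposition $I^{**}=I\oplus I_*^{\perp}$. In fact you supply more detail than the paper does, since the paper simply asserts that $I^{**}=I\oplus I_*^{\perp}$ is a decomposition of $A$-bimodules, whereas you justify explicitly that $I_*$ is an $A$-submodule of $I^{*}$ (using the separate $w^{*}$-continuity of multiplication in $A$ and the $w^{*}$-closedness of $I$) and hence that $P$ is a bimodule map.
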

\begin{proof}
	Let $D\in Z^1_{w^*}(A,I)$ and $E:I\rightarrow I^{**}$ be the natural embedding. Then $E\circ D:A\rightarrow I^{**}$ is a bounded derivation. Since $H^1(A,I^{**})=\{0\}$,  there exists $a^{**}\in I^{**}$ such that $E\circ D=\delta_{a^{**}}$. Consider the decomposition $I^{**}=I\oplus I_{*}^{\perp}$ as an $A$-bimodule. If $P:I^{**}\rightarrow I$ is a projection, we have $D=\delta_{p(a^{**})}$. Then  $H^1_{w^*}(A,I)=\{0\}$.
\end{proof}
A  Banach algebra $A$ is without of order if for any $a,b\in A$, $ab=0$ implies that $a=0$ or $b=0$. Semisimple and unital Banach algebras are without of order Banach algebras. Now by Proposition \ref{pro1} and \cite[Theorem 3.1]{EJ}, we have the following result.
\begin{corollary}\label{cc1}
	Let $A$ be a dual Banach algebra and $I$  be a closed two-sided ideal in $A$ such that $I$ is without order. If $H^1(A,I^{**})=\{0\}$, then $H^1_{w^*}(I,I)=\{0\}$.
\end{corollary}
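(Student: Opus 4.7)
The plan is to view the corollary as a straightforward composite of Proposition~\ref{pro1} with \cite[Theorem~3.1]{EJ}, noting that the hypothesis ``$I$ is without order'' is exactly what is needed to use both ingredients.

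First I would invoke \cite[Theorem~3.1]{EJ} to cash in the without-order hypothesis: in the setup of a dual Banach algebra $A$ with a closed two-sided ideal $I$ that is without order, the results of \cite{EJ} on $w^*$-continuous derivations from dual Banach algebras into their ideals force $I$ to be $w^*$-closed in $A$. Thus, as in the Remark preceding the corollary, $I$ inherits a dual Banach algebra structure with predual $I_\ast=A_\ast/{}^\perp I$, and $I$ is canonically an $A$-submodule of $A$.

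Second I would apply Proposition~\ref{pro1} with this $w^*$-closed ideal $I$: the assumption $H^1(A,I^{**})=\{0\}$ gives $H^1_{w^*}(A,I)=\{0\}$. Concretely, any $D\in Z^1_{w^*}(A,I)$ is pushed to $E\circ D\in Z^1(A,I^{**})$, whose innerness descends back to $D$ via the bimodule decomposition $I^{**}=I\oplus I_\ast^\perp$ and the projection onto $I$.

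Third, I would pass from $A$ down to $I$: a $w^*$-continuous derivation $D:I\to I$ is promoted, again by \cite[Theorem~3.1]{EJ} (whose role is precisely to compare $Z^1_{w^*}(I,I)$ with $Z^1_{w^*}(A,I)$ for without-order ideals $I$), to a $w^*$-continuous derivation $\widetilde{D}:A\to I$ with $\widetilde{D}|_I=D$; here the without-order hypothesis ensures the extension is well defined through the defining relation $\widetilde{D}(a)\cdot i=D(ai)-a\cdot D(i)$. Since $\widetilde{D}$ is inner by step two, so is $D$, and $H^1_{w^*}(I,I)=\{0\}$.

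The main obstacle is purely bookkeeping about what \cite[Theorem~3.1]{EJ} actually supplies: if that theorem already packages both the $w^*$-closedness of $I$ and the extension/restriction isomorphism between $Z^1_{w^*}(A,I)$ and $Z^1_{w^*}(I,I)$ under the without-order hypothesis, the corollary is immediate. Otherwise one has to check by hand that the candidate extension $\widetilde{D}(a)$ is $w^*$-continuous, which reduces to a routine net argument using normality of $I$ as a dual $A$-bimodule.
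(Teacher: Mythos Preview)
Your proposal matches the paper's approach exactly: the paper's entire proof is the one-line sentence preceding the corollary, ``Now by Proposition~\ref{pro1} and \cite[Theorem~3.1]{EJ}, we have the following result,'' and you have correctly identified and unpacked this composite. Your closing caveat---that the only real issue is bookkeeping about what \cite[Theorem~3.1]{EJ} actually delivers (the $w^*$-closedness needed for Proposition~\ref{pro1}, the passage from $H^1_{w^*}(A,I)$ down to $H^1_{w^*}(I,I)$, or both)---is precisely the situation, since the paper supplies no further detail than the bare citation.
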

\begin{example}
	$\mathrm{(i)}$  Let $G$ be a locally compact group. A linear subspace $S^1(G)$ of $L^1(G)$ is said to be a Segal algebra, if it satisfies the following conditions:
	\begin{itemize}
		\item[(S1)] $S^1(G)$ is a dense  in $L^1(G)$;
		\item[(S2)] If $f\in S^1(G)$, then $L_xf\in S^1(G)$, i.e. $S^1(G)$ is left translation invariant;
		\item[(S3)] $S^1(G)$ is a Banach space under some norm $\|\cdot\|_S$ and $\|L_xf\|_s=\|f\|_s$, for all $f\in S^1(G)$ and $x\in G$;
		\item[(S4)]  $x\mapsto L_xf$ from $G$ into $S^1(G)$ is continuous.
	\end{itemize}
	
	For more details about Segal algebras, see \cite{ri1, ri}. Now, let $G$ be an abelian locally compact group. Then $H^1(L^1(G),S^1(G)^{**})=\{0\}$. Then by Proposition \ref{pro1} and Corollary \ref{cc1}, we have $H_{w^*}^1(L^1(G),S^1(G))=\{0\}$ and $H_{w^*}^1(S^1(G),S^1(G))=\{0\}$.
	
	$\mathrm{(ii)}$ Let $\Lambda$ be a non-empty, totally ordered set, and regard it as a semigroup by defining the product
	of two elements to be their maximum. The resulting semigroup, which we denote by $\Lambda_\vee$, is a
	semilattice. We may then form the $\ell^1$-convolution algebra $\ell^1(\Lambda_\vee)$. For every $t\in\Lambda_\vee$ we denote
	the point mass concentrated at $t$ by $e_t$. The definition of multiplication in $\ell^1(\Lambda_\vee)$ ensures that
	$e_se_t = e_{\max(s,t)}$ for all $s$ and $t$.
	
	The semilattice $\Lambda_\vee$, is a commutative semigroup in which every element is idempotent. If we denote the set of  idempotent elements of $\Lambda_\vee$ by $E(\Lambda_\vee)$, then $E(\Lambda_\vee)=\Lambda_\vee$.
	The $\ell^1$-convolution algebras of semilattices provide interesting examples of commutative Banach
	algebras. By \cite[Proposition 3.3]{J}, $H^1(\ell^1(\Lambda_\vee),I^{**})=\{0\}$, for any closed two-sided $I$ of $\ell^1(\Lambda_\vee)$. Then  by Corollary \ref{cc1},  $H^1_{w^*}(I,I)=\{0\}$,  for any closed two-sided $I$ of $\ell^1(\Lambda_\vee)$.
	
	$\mathrm{(iii)}$ Let $K$ be an infinite compact metric space,  $\alpha\in(0,1)$ and $\mathrm{lip}_\alpha K$ be the small Lipchitz algebra (see Example \ref{ex1}). By \cite[Proposition 3.4]{J}, $H^1(\mathrm{lip}_\alpha K,I^{**})=\{0\}$, for any closed two-sided $I$ of $\mathrm{lip}_\alpha K$. Then by Corollary \ref{cc1}, $H^1_{w^*}(I,I)=\{0\}$,  for any closed two-sided $I$ of $\mathrm{lip}_\alpha K$.
\end{example}
\section{Representations of derivations and Arens regularity }
Let $A$ be a Banach algebra and $B$ be a Banach $A$-bimodule with the module action ``$\bullet$''. Then for every $b\in B$, we define
$$L_{b}(a)=b\bullet a\quad\text{and}\quad R_{b}(a)=a\bullet b,$$
for every $a\in A$. These are the operation of left and right multiplication by $b$ on $A$. In the following by using the super-amenability of Banach algebra $A$, we give a representation for $Z^1(A,C)$, where $C$ is a Banach $A$-bimodule.

For a Banach $A$-bimodule $B$ and for a derivation $D:A\rightarrow B^{*}$, we show that the left module action $\pi_\ell:A\times B\rightarrow B$ is Arens regular whenever  $D^{\prime\prime}:A^{**}\rightarrow B^{***}$ is a derivation and $B^*\subseteq D^{\prime\prime}(A^{**})$.   On the other hand, if $A$ is a left strongly Arens irregular and $A^{**}$ is amenable Banach algebra with respect to the first Arens product, then $A$ is unital. Moreover, if $A$ is a dual Banach algebra, it follows that $A$ is reflexive.
\begin{theorem}\label{t3}
	Assume that $A$ is an amenable Banach algebra. Then there are Banach $A$-bimodules $C$, $D$ and  elements $\mathfrak{a}, \mathfrak{b}\in A^{**}$ such that $$Z^1(A,C^*)=\{R_{D^{\prime\prime}(\mathfrak{a})}:~D\in Z^1(A,C^*)\}=\{L_{D^{\prime\prime}(\mathfrak{b})}:~D\in Z^1(A,D^*)\}.$$
\end{theorem}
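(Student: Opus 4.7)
The plan is to combine two standard consequences of amenability of $A$: first, $H^1(A,X^*)=\{0\}$ for every Banach $A$-bimodule $X$, so that every continuous derivation into a dual Banach $A$-bimodule is inner; second, by Johnson's theorem $A$ admits a bounded approximate identity, whose $w^*$-cluster points serve as mixed identities in $A^{**}$ relative to the two Arens products. With these in hand, the strategy is to choose the bimodules $C$ and $D$ so that the standard inner-derivation formula $\delta_{c^*}(a)=a\cdot c^*-c^*\cdot a$ degenerates into a single one-sided term, after which a second-transpose computation against an appropriate cluster point gives the desired representation.

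For the first equality, I would take $C$ to be a Banach $A$-bimodule whose left action of $A$ is trivial. By duality the right action of $A$ on $C^*$ then vanishes, since $\langle c^*\cdot a,c\rangle=\langle c^*,a\cdot c\rangle=0$, so every inner derivation collapses to $\delta_{c^*}(a)=a\cdot c^*=R_{c^*}(a)$. By amenability every $\Delta\in Z^1(A,C^*)$ equals $R_{c^*}$ for some $c^*\in C^*$. Let $(e_{\alpha})\subseteq A$ be a bounded approximate identity and let $\mathfrak{a}\in A^{**}$ be a $w^*$-cluster point, which acts as a right identity of $A^{**}$ for the first Arens product. The $w^*$-$w^*$ continuity of $\Delta''$ in the first variable then yields $\Delta''(\mathfrak{a})=\mathfrak{a}\cdot c^*\in C^{***}$, and associativity of the Arens-extended action gives $a\cdot\Delta''(\mathfrak{a})=(a\cdot\mathfrak{a})\cdot c^*=a\cdot c^*=\Delta(a)$ for all $a\in A$. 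Thus $\Delta=R_{\Delta''(\mathfrak{a})}$, which proves the first equality and simultaneously shows that the operator $R_{\Delta''(\mathfrak{a})}$ does in fact take values in the canonical copy of $C^*$ inside $C^{***}$.

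The second equality is proved by the mirror construction. Take the bimodule $D$ to have trivial right action of $A$, so that the left action of $A$ on $D^*$ vanishes and every inner derivation reduces to $\delta_{c^*}=-L_{c^*}$. Choose $\mathfrak{b}\in A^{**}$ to be a $w^*$-cluster point of a bounded approximate identity acting as a left identity for the second Arens product, and repeat the associativity computation to obtain $\Delta=L_{-\Delta''(\mathfrak{b})}$ for every $\Delta\in Z^1(A,D^*)$; absorbing the sign into the element yields the stated parameterization. The common value of the two sets is $Z^1(A,C^*)$: by selecting $C$ and $D$ on the same underlying Banach space, but with the roles of the two one-sided actions swapped, the two parameterizations (by $C^*$ and by $D^*$) label the same family of maps $A\to C^*=D^*$ and hence the same set of derivations.

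The principal obstacle is keeping the two Arens products under strict bookkeeping: the identity property of $\mathfrak{a}$ must be matched to the Arens extension used to define $R_{\Delta''(\mathfrak{a})}$, and likewise $\mathfrak{b}$ must correspond to the extension used for $L_{\Delta''(\mathfrak{b})}$, since these are the only variables in which the extended actions are $w^*$-continuous. Once the correct pairing of cluster points with Arens products is fixed, the associativity $(a\cdot\mathfrak{a})\cdot c^*=a\cdot c^*$ is immediate and the verification that $R_{\Delta''(\mathfrak{a})}$ and $L_{\Delta''(\mathfrak{b})}$ land back in $C^*$ follows from the same mixed-identity property.
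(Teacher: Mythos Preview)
Your proposal is correct and follows essentially the same route as the paper: take an underlying bimodule $B$, define $C$ (resp.\ $D$) by killing the left (resp.\ right) $A$-action on $B$, so that inner derivations into the dual collapse to one-sided multiplications, and then use a $w^*$-cluster point of a bounded approximate identity in $A^{**}$ to recover the multiplier as $D''(\mathfrak{a})$ (resp.\ $D''(\mathfrak{b})$).

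The one technical difference worth noting is how $D''(\mathfrak{a})$ is actually computed. The paper first invokes Cohen factorization to show that the BAI $(e_\alpha)$ of $A$ also serves as a BAI for the module $B$; this yields $e_\alpha\cdot c^*\xrightarrow{w^*} c^*$ in $C^*$, so that the explicit limit $D''(e'')=w^*\hbox{-}\lim_\alpha e_\alpha\cdot c^*=c^*$ lands in $C^*$ itself. Your argument sidesteps Cohen factorization by appealing instead to associativity of the Arens-extended action, $a\cdot D''(\mathfrak{a})=(a\cdot\mathfrak{a})\cdot c^*=a\cdot c^*$. The paper's computation has the advantage of identifying $D''(\mathfrak{a})$ as an honest element of $C^*$ rather than of $C^{***}$; your version only shows that $R_{D''(\mathfrak{a})}$ takes values in $C^*$, which is enough for the stated equality of sets but leaves $D''(\mathfrak{a})$ itself in the larger space. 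Your remark about a sign in the $L$-case is harmless but unnecessary: the parallel computation gives $D''(\mathfrak{b})=-c^*$ and $L_{-c^*}=\Delta$ directly, with the sign already absorbed by the parametrization over all of $Z^1(A,D^*)$.
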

\begin{proof}
	Suppose that $B$ is a Banach $A$-bimodule with a module action $\bullet$. Every amenable Banach algebra has a BAI \cite[Proposition 2.2.1]{ru.1}, so $A$ has a BAI such as $(e_{\alpha})_{\alpha}$. Then by Cohen factorization Theorem we have $B\bullet A=B=A\bullet B$, i.e.,  for every $b\in B$, there are $y,z\in B$ and $a,t\in A$ such that $y\bullet a=b=t\bullet z$. Then we have
	\begin{equation}\label{eqeq1}
	\lim_\alpha b\bullet e_{\alpha}=\lim_\alpha (y\bullet a)\bullet e_{\alpha}=\lim_\alpha y\bullet(a e_{\alpha})=y\bullet a=b
	\end{equation}
	and
	\begin{equation}\label{eqeq11}
	\lim_\alpha e_{\alpha}\bullet b=\lim_\alpha e_{\alpha}\bullet (t\bullet z)=\lim_\alpha (te_{\alpha})\bullet z=t\bullet z=b.
	\end{equation}
	
	It follows that $B$ has a BAI as $(e_{\alpha})_{\alpha}\subseteq A$. Let $e^{\prime\prime}$ and $f''$ be the right and left unit for $A^{**}$, respectively such that $e_{\alpha} \stackrel{w^*} {\rightarrow}e^{\prime\prime}$ and $e_{\alpha} \stackrel{w^*} {\rightarrow}f^{\prime\prime}$ in $A^{**}$.
	
	Take $C=B$ and  define a module action ``$\cdot$'' as $a\cdot x=0$ and $x\cdot a=x\bullet a$, for all $a\in A$ and $x\in C$. Clearly, $(C,\cdot)$ is a Banach $A$-bimodule. Suppose that $D\in Z^1(A,C^*)$. Then there is an element $c\in C^*$ such that $D=\delta_c$. Then for every $a\in A$, we have
	$$D(a)=\delta_c (a)=a\cdot c-c\cdot a=a\bullet c.$$
	
	From \eqref{eqeq1} and module actions of $C$, for any $x\in C$ and $x'\in C^*$, we have
	\begin{equation}\label{eqeq2}
	\lim_\alpha\langle x,x'\cdot e_\alpha\rangle= \lim_\alpha\langle e_\alpha\cdot x,x' \rangle=0
	\end{equation}
	and
	\begin{equation}\label{eqeq3}
	\lim_\alpha\langle x,e_\alpha\cdot x' \rangle= \lim_\alpha\langle x\cdot e_\alpha,x' \rangle=\lim_\alpha\langle x\bullet e_\alpha,x' \rangle=\langle x, x' \rangle.
	\end{equation}
	
	It follows that $e_{\alpha}\cdot x' \stackrel{w^*}{\rightarrow}x'$ in $C^*$. Since $D^{\prime\prime}$ is a weak$^*$-to-weak$^*$ continuous linear operator, we have
	\begin{align*}
	D^{\prime\prime}(e^{\prime\prime})&=D^{\prime\prime}(w^*-\lim_\alpha e_{\alpha})=w^*-\lim_\alpha D^{\prime\prime}(e_{\alpha})=w^*-\lim_\alpha D(e_{\alpha})\\
	&=w^*-\lim_\alpha (e_{\alpha}x')=x'.
	\end{align*}
	
	Thus we conclude that $D(a)=a\cdot D^{\prime\prime}(e^{\prime\prime})=a\bullet D^{\prime\prime}(e^{\prime\prime})$ for all $a\in A$. It follows that $D=R_{D^{\prime\prime}(e^{\prime\prime})}.$
	On the other hand, since for every derivation $D\in Z^1(A,C^*)$,  $R_{D^{\prime\prime}(e^{\prime\prime})}\in
	Z^1(A,C^*)$, the result  holds.
	
	Now, again consider $B$ as a Banach $A$-bimodule with the module action ``$\bullet$'' and set $D=B$ with the module action $\triangleleft$ such that $a\triangleleft y=a\bullet y$ and $y\triangleleft a=0$, for all $a\in A$ and $y\in D$. By a similar argument that we have discussed above, and setting $\mathfrak{b}=f''$, the proof completes.
\end{proof}
\begin{example}
	\begin{itemize}
		\item[(i)]  Let $G$ be an amenable locally compact group. Then by Johnson Theorem $H^1(L^1(G),X^*)=\{0\}$, for every Banach $A$-bimodule $X$. Then by defining the similar module actions of $L^\infty(G)$ as a Banach $L^1(G)$-bimodule in the proof of Theorem \ref{t3} and by this Theorem, we have
		\begin{align*}
		Z^1(L^1(G),L^\infty(G))&=\{R_{D(e'')}:D\in Z^1(L^1(G),L^\infty(G))\}  \\
		& =\{L_{D(f'')}:D\in Z^1(L^1(G),L^\infty(G))\},
		\end{align*}
		where $e''$ and $f''$ are the left and right units of $L^1(G)^{**}$, indeed they $w^*$-accumulations of the BAI of $L^1(G)$.
		\item[(ii)]  Let $G$ be locally compact group. Then by \cite[Corollary 1.2]{12} $H^1(L^1(G),M(G))=\{0\}$. Then by applying the module actions defined in the proof of Theorem \ref{t3}, we can see  $M(G)$ as a Banach $L^1(G)$-bimodule. Then by Theorem \ref{t3}, we have \begin{align*}
		Z^1(L^1(G),L^\infty(G))&=\{R_{D(e'')}:D\in Z^1(L^1(G),L^\infty(G))\}  \\
		& =\{L_{D(f'')}:D\in Z^1(L^1(G),L^\infty(G))\},
		\end{align*}
		where $e''$ and $f''$ are the left and right units of $L^1(G)^{**}$.
	\end{itemize}
\end{example}
\begin{theorem}\label{t4}
	Let $A$ be a Banach algebra, $B$ be a Banach $A$-bimodule and  $D:A\longrightarrow B^*$ be a continuous derivation. If $D^{\prime\prime}:A^{**}\longrightarrow B^{***}$ is a derivation and $B^*\subseteq D^{\prime\prime}(A^{**})$, then $Z^\ell_{A^{**}}(B^{**})=B^{**}$.
\end{theorem}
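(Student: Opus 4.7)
The plan is to establish Arens regularity of the left module action $\pi_\ell : A \times B \to B$, which is precisely the statement $Z^\ell_{A^{**}}(B^{**}) = B^{**}$. Concretely, I aim to show that
$$\langle \pi_\ell^{***}(a^{\prime\prime}, b^{\prime\prime}), b^\prime \rangle = \langle \pi_\ell^{t***t}(a^{\prime\prime}, b^{\prime\prime}), b^\prime \rangle$$
for all $a^{\prime\prime} \in A^{**}$, $b^{\prime\prime} \in B^{**}$, $b^\prime \in B^*$. Using the hypothesis $B^*\subseteq D^{\prime\prime}(A^{**})$, it suffices to verify this for $b^\prime = D^{\prime\prime}(c^{\prime\prime})$ with $c^{\prime\prime} \in A^{**}$.

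Pick bounded nets $(a_\alpha) \subseteq A$, $(b_\beta) \subseteq B$, $(c_\gamma) \subseteq A$ with $a_\alpha \stackrel{w^*}{\to} a^{\prime\prime}$, $b_\beta \stackrel{w^*}{\to} b^{\prime\prime}$, $c_\gamma \stackrel{w^*}{\to} c^{\prime\prime}$; weak$^*$-weak$^*$ continuity of $D^{\prime\prime}$ gives $D(c_\gamma) \stackrel{w^*}{\to} b^\prime$ in $B^{***}$. The derivation identity $D(c_\gamma a_\alpha) = c_\gamma \cdot D(a_\alpha) + D(c_\gamma) \cdot a_\alpha$ on $A$, combined with the dual-module relations $\langle D(c_\gamma) \cdot a_\alpha, b_\beta \rangle = \langle D(c_\gamma), a_\alpha b_\beta \rangle$ and $\langle c_\gamma \cdot D(a_\alpha), b_\beta \rangle = \langle D(a_\alpha), b_\beta c_\gamma \rangle$, yields
$$\langle D(c_\gamma), a_\alpha b_\beta \rangle = \langle D(c_\gamma a_\alpha), b_\beta \rangle - \langle D(a_\alpha), b_\beta c_\gamma \rangle.$$
Taking $\lim_\gamma$ and using that $c_\gamma a_\alpha \to c^{\prime\prime} a_\alpha$ and $b_\beta c_\gamma \to b_\beta c^{\prime\prime}$ weak$^*$ (unambiguously, since $a_\alpha \in A$ and $b_\beta \in B$) produces
$$\langle b^\prime, a_\alpha b_\beta \rangle = \langle D^{\prime\prime}(c^{\prime\prime} a_\alpha), b_\beta \rangle - \langle D(a_\alpha), b_\beta c^{\prime\prime} \rangle.$$
Taking the remaining iterated limits in either order $\lim_\alpha \lim_\beta$ or $\lim_\beta \lim_\alpha$ yields the same scalar,
$$\langle D^{\prime\prime}(c^{\prime\prime} \Diamond a^{\prime\prime}), b^{\prime\prime}\rangle - \langle D^{\prime\prime}(a^{\prime\prime}), \pi_r^{***}(b^{\prime\prime}, c^{\prime\prime})\rangle,$$
where $\Diamond$ is the second Arens product on $A^{**}$. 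This uses three inputs: (i) for $a_\alpha \in A$, the value $c^{\prime\prime} a_\alpha = m^{t***t}(c^{\prime\prime}, a_\alpha)$ is weak$^*$-weak$^*$ continuous in $a_\alpha$, forcing $c^{\prime\prime} a_\alpha \to c^{\prime\prime} \Diamond a^{\prime\prime}$ weak$^*$; (ii) $D^{\prime\prime}$ is weak$^*$-weak$^*$ continuous; and (iii) $\pi_r^{***}$ is weak$^*$-weak$^*$ continuous in its first variable. Consequently the two iterated limits agree, so $\pi_\ell$ is Arens regular and $Z^\ell_{A^{**}}(B^{**}) = B^{**}$.

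The main obstacle is the asymmetric weak$^*$-weak$^*$ continuity of $m^{***}$ and $m^{t***t}$ on $A^{**}$: each is continuous only in a single variable, so the iterated limits can only be taken by routing through the identity $m^{t***t}(c^{\prime\prime}, a_\alpha) = m^{***}(c^{\prime\prime}, a_\alpha)$ available when $a_\alpha\in A$. The hypothesis that $D^{\prime\prime}$ is itself a derivation provides an algebraic consistency check: by the derivation identity $D^{\prime\prime}(c^{\prime\prime} a^{\prime\prime}) = c^{\prime\prime} \cdot D^{\prime\prime}(a^{\prime\prime}) + D^{\prime\prime}(c^{\prime\prime})\cdot a^{\prime\prime}$ with respect to the first Arens product, the same scalar above can equivalently be expressed in terms of $c^{\prime\prime} a^{\prime\prime}$ rather than $c^{\prime\prime} \Diamond a^{\prime\prime}$, reconciling the two Arens viewpoints and confirming that the common value of the iterated limits is indeed the pairing we want.
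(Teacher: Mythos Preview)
Your argument has a genuine gap in the iterated-limit step. After taking $\lim_\gamma$ you arrive at
\[
\langle b', a_\alpha b_\beta\rangle \;=\; \langle D''(c'' a_\alpha), b_\beta\rangle \;-\; \langle D(a_\alpha), b_\beta c''\rangle,
\]
and you then assert that the iterated limits in $\alpha,\beta$ of the right-hand side (in either order) equal $\langle D''(c''\Diamond a''), b''\rangle - \langle D''(a''), \pi_r^{***}(b'', c'')\rangle$. But this termwise computation is not justified: the convergence $b_\beta \stackrel{w^*}{\to} b''$ in $B^{**}$ is $\sigma(B^{**},B^*)$-convergence, so it only yields $\phi(b_\beta)\to\phi(b'')$ for $\phi\in B^*$, not for arbitrary $\phi\in B^{***}$. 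The element $D''(c'' a_\alpha)$ lies in $B^{***}$, and nothing you have written forces it into $B^*$; hence $\lim_\beta \langle D''(c'' a_\alpha), b_\beta\rangle = \langle D''(c'' a_\alpha), b''\rangle$ is unproved. The same defect occurs in $\lim_\beta \langle D''(a''), b_\beta c''\rangle$, since again $D''(a'')\in B^{***}$ need not be weak$^*$-continuous on $B^{**}$. Notice, too, that as written your computation never actually uses the hypothesis that $D''$ is a derivation; you invoke it only as an after-the-fact ``consistency check''. That is a warning sign: the hypothesis is not decorative.

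This is exactly the point where the paper's proof diverges from yours. The paper does not attempt a direct iterated-limit calculation; instead it invokes \cite[Theorem 4.2]{20} to deduce from the derivation property of $D''$ that $D''(A^{**})\cdot B^{**}\subseteq B^*$ inside $B^{***}$. Combined with $B^*\subseteq D''(A^{**})$ this gives $B^* B^{**}\subseteq B^*$, i.e.\ the relevant functionals on $B^{**}$ are genuinely weak$^*$-continuous, and the desired continuity $b''\cdot a''_\alpha \stackrel{w^*}{\to} b''\cdot a''$ follows in two lines. In other words, the cited result supplies precisely the ``lands in $B^*$'' information that your limit argument tacitly assumes but does not establish. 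A secondary point: the paper's proof checks weak$^*$-continuity of $a''\mapsto b''\cdot a''$, i.e.\ it works with the right action $\pi_r$, not with $\pi_\ell$ as in your set-up; you should make sure your reading of $Z^\ell_{A^{**}}(B^{**})$ matches the paper's.
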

\begin{proof}
	Since $D^{\prime\prime}:A^{**}\rightarrow B^{***}$ is a derivation, by \cite[Theorem 4.2]{20},
	$D^{\prime\prime}(A^{**})B^{**}\subseteq B^*$. Due to $B^*\subseteq D^{\prime\prime}(A^{**})$, we have $B^*B^{**}\subseteq B^*$. Let   $(a_{\alpha}^{\prime\prime})_{\alpha}\subseteq A^{**}$ such that  $a^{\prime\prime}_{\alpha} \stackrel{w^*} {\rightarrow}a^{\prime\prime}$ in $A^{**}$. Assume that $b^{\prime\prime}\in B^{**}$. Then for every $b^\prime\in B^*$, since $b^\prime b^{\prime\prime}\in B^*$, we have
	$$\langle b^{\prime\prime}.a_{\alpha}^{\prime\prime},b^\prime\rangle=\langle a_{\alpha}^{\prime\prime}b^\prime, b^{\prime\prime}\rangle\rightarrow
	\langle a^{\prime\prime},b^\prime b^{\prime\prime}\rangle=\langle b^{\prime\prime}.a^{\prime\prime},b^\prime \rangle.$$
	
	Thus $b^{\prime\prime}.a^{\prime\prime}_{\alpha} \stackrel{w^*} {\rightarrow}b^{\prime\prime}.a^{\prime\prime}$ is in $B^{**}$, and so $b^{\prime\prime}\in Z^\ell_{A^{**}}(B^{**})$.
\end{proof}
\begin{corollary}\label{c3.4}
	Let $A$ be a Banach algebra and $D:A\longrightarrow A^*$ be a continuous derivation such that $A^*\subseteq D^{\prime\prime}(A^{**})$. If  $D^{\prime\prime}:A^{**}\longrightarrow A^{***}$ is a derivation, then  $A$ is Arens regular.
\end{corollary}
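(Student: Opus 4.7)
The plan is to invoke Theorem \ref{t4} directly with $B := A$, viewed as a Banach $A$-bimodule with its own multiplication as the two module actions. All hypotheses of Theorem \ref{t4} then translate verbatim into the hypotheses of the corollary: $D : A \to A^*$ is a continuous derivation with $A^* \subseteq D''(A^{**})$, and $D'' : A^{**} \to A^{***}$ is a derivation. Applying the theorem yields $Z^\ell_{A^{**}}(A^{**}) = A^{**}$.

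The second step is to translate this conclusion into Arens regularity. When $B = A$, the right module action $\pi_r : B \times A \to B$ is precisely the algebra multiplication $m : A \times A \to A$. Consequently the topological center $Z^\ell_{A^{**}}(B^{**})$ described in the preliminaries coincides with the first topological center $Z_1(m)$ of the multiplication. Hence $Z_1(m) = A^{**}$, and by the characterization recalled in the introduction ($m$ is Arens regular iff $Z_1(m) = A^{**}$ or $Z_2(m) = A^{**}$), the Banach algebra $A$ is Arens regular.

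Essentially there is no obstacle: the only point requiring attention is to make sure that the identification of $Z^\ell_{A^{**}}(A^{**})$ with the first topological center of multiplication is stated explicitly, so that the reader sees why ``every $b'' \in A^{**}$ lies in $Z^\ell_{A^{**}}(A^{**})$'' means exactly that $b'' \mapsto a'' \cdot b''$ is weak$^*$-weak$^*$ continuous for every $a'' \in A^{**}$, i.e., the two Arens products on $A^{**}$ coincide. The proof therefore reduces to a two-line invocation of Theorem \ref{t4} plus the dictionary between topological centers and Arens regularity.
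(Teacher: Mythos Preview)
Your proposal is correct and is exactly the intended argument: the paper states Corollary~\ref{c3.4} without proof precisely because it is the specialization of Theorem~\ref{t4} to $B=A$, together with the observation that $Z^\ell_{A^{**}}(A^{**})=A^{**}$ is the condition $Z_1(A^{**})=A^{**}$, i.e., Arens regularity. There is nothing to add.
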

\begin{example}
	Let $G$ be an infinite locally compact group. Thus, $L^1(G)$ is not Arens regular. Then Corollary \ref{c3.4} implies that there is no $D\in Z^1(L^1(G),L^1(G)^*)$ such that $L^1(G)^*\subseteq D^{\prime\prime}(L^1(G)^{**})$ and its second transpose $D''$ is a derivation.
\end{example}
\begin{lemma}\label{lem3}
	Let $B$ be a Banach left $A$-module and $B^{**}$ has a LBAI with respect to $A^{**}$. Then $B^{**}$ has a left unit with respect to $A^{**}$.
\end{lemma}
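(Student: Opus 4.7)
The plan is to produce the left unit as a weak$^*$-cluster point of the given left bounded approximate identity (LBAI), relying on two standard ingredients: the weak$^*$-compactness supplied by Banach--Alaoglu and the weak$^*$-continuity of the module action in its first variable, which is built into the definition of $\pi_\ell^{***}$ recalled in the introduction of the paper.

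First I would fix a net $(e_\alpha'')_\alpha \subseteq A^{**}$ witnessing that $B^{**}$ has a LBAI with respect to $A^{**}$, so that $\pi_\ell^{***}(e_\alpha'', b'') \to b''$ for every $b'' \in B^{**}$, and $\sup_\alpha \|e_\alpha''\| < \infty$. The boundedness is essential: by the Banach--Alaoglu theorem the closed ball of radius $\sup_\alpha \|e_\alpha''\|$ in $A^{**}$ is weak$^*$-compact, hence $(e_\alpha'')_\alpha$ has a weak$^*$-cluster point $e'' \in A^{**}$. Passing to a suitable subnet I may assume $e_\alpha'' \xrightarrow{w^*} e''$ in $A^{**}$.

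Next I would verify that $e''$ is a left unit. Fix $b'' \in B^{**}$. By the characterization of $\pi_\ell^{***}$ recalled in the introduction, the map $a'' \mapsto \pi_\ell^{***}(a'', b'')$ from $A^{**}$ into $B^{**}$ is weak$^*$--weak$^*$ continuous, so
\begin{equation*}
\pi_\ell^{***}(e_\alpha'', b'') \xrightarrow{w^*} \pi_\ell^{***}(e'', b'') \quad \text{in } B^{**}.
\end{equation*}
On the other hand the LBAI property gives $\pi_\ell^{***}(e_\alpha'', b'') \to b''$, which in particular holds weak$^*$. Uniqueness of weak$^*$-limits then forces $\pi_\ell^{***}(e'', b'') = b''$, and since $b''$ was arbitrary, $e''$ is a left unit for $B^{**}$ with respect to $A^{**}$, as required.

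The argument is short and I do not expect a real obstacle: the only point that must be handled with care is the choice of Arens-type extension used to define the $A^{**}$-action on $B^{**}$. As long as we consistently use $\pi_\ell^{***}$ (so that weak$^*$-continuity in the first variable is automatic), the cluster-point extraction goes through verbatim; the alternative extension $\pi_\ell^{t***t}$ would be weak$^*$-continuous in the wrong variable and would not yield the conclusion. I would therefore state at the outset which action is being used, and then let weak$^*$-compactness and weak$^*$-continuity do all the work.
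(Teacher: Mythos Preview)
Your proposal is correct and follows essentially the same route as the paper: extract a weak$^*$-cluster point of the bounded net $(e_\alpha'')$ via Banach--Alaoglu, then use the weak$^*$--weak$^*$ continuity of $a''\mapsto \pi_\ell^{***}(a'',b'')$ together with the LBAI convergence to identify the limit as $b''$. The paper writes the key step as the explicit duality computation $\langle \pi_\ell^{***}(e'',b''),b'\rangle=\langle e'',\pi_\ell^{**}(b'',b')\rangle=\lim_\alpha\langle \pi_\ell^{***}(e_\alpha'',b''),b'\rangle=\langle b'',b'\rangle$, which is just an unwinding of the continuity statement you invoke abstractly.
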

\begin{proof}
	Assume that $(e^{\prime\prime}_{\alpha})_{\alpha}\subseteq A^{**}$ is a LBAI for $B^{**}$. By passing to a suitable subnet, we may suppose that there is an $e^{\prime\prime}\in A^{**}$ such that $e^{\prime\prime}_{\alpha} \stackrel{w^*} {\rightarrow}e^{\prime\prime}$ in $A^{**}$. Then for every $b^{\prime\prime}\in B^{**}$ and $b^\prime\in B^*$, we have
	\begin{align*}
	\langle \pi_\ell^{***}(e^{\prime\prime},b^{\prime\prime}),b^\prime\rangle &=
	\langle e^{\prime\prime},\pi_\ell^{**}(b^{\prime\prime},b^\prime)\rangle=
	\lim_\alpha \langle e_\alpha^{\prime\prime},\pi_\ell^{**}(b^{\prime\prime},b^\prime)\rangle\\
	&=\lim_\alpha \langle \pi_\ell^{***}(e_\alpha^{\prime\prime},b^{\prime\prime}),b^\prime\rangle=
	\langle b^{\prime\prime},b^\prime\rangle.
	\end{align*}
	
	It follows that $\pi_\ell^{***}(e^{\prime\prime},b^{\prime\prime})=b^{\prime\prime}$.
\end{proof}
\begin{theorem}\label{t44}
	Let $A$ be a left strongly Arens irregular and suppose that $A^{**}$ is an amenable Banach algebra. Then we have the following assertions.
	\begin{itemize}
		\item[(i)] $A$ has an identity.
		\item[(ii)] If $A$ is a dual Banach algebra, then $A$ is reflexive.
	\end{itemize}
\end{theorem}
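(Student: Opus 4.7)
The plan is to use amenability of $A^{**}$ to extract an identity which left strong Arens irregularity forces into $A$, establishing (i); then to leverage (i) together with the dual structure to deduce (ii).

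For (i), since every amenable Banach algebra admits a bounded approximate identity, $A^{**}$ has a BAI $(e''_\alpha)_\alpha\subseteq A^{**}$. Lemma~\ref{lem3} with $B=A$ then produces a left unit $e''$ of $A^{**}$ for the first Arens product. Left multiplication by $e''$ on $A^{**}$ is the identity map, hence trivially weak$^*$-weak$^*$ continuous, so $e''\in Z_1(A^{**})$. The hypothesis $Z_1(A^{**})=A$ forces $e:=e''\in A$; since both Arens products restrict to the original multiplication on $A\times A$, this gives $ea=a$ for every $a\in A$. A symmetric argument, taking a weak$^*$ cluster point of the BAI and working with the second Arens product, supplies $ae=a$, so $e$ is a two-sided identity of $A$.

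For (ii), part (i) gives an identity $e\in A$ which also serves as identity of $A^{**}$ for both Arens products. Since $A$ is dual with predual $A_*$, the discussion preceding Theorem~\ref{3.3aa} provides the decomposition $A^{**}=A\oplus(A_*)^\perp$, reducing reflexivity to showing $(A_*)^\perp=\{0\}$. A direct calculation—using that $\phi\cdot a\in A_*$ for $\phi\in A_*$ and $a\in A$ (a consequence of $A$ being a dual Banach algebra)—shows the canonical projection $P\colon A^{**}\to A$ (restriction to $A_*$) is an algebra homomorphism for the first Arens product, and hence $(A_*)^\perp=\ker P$ is a closed two-sided ideal of $A^{**}$. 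As a complemented ideal of the amenable algebra $A^{**}$, it inherits a bounded approximate identity $(f_\beta)\subseteq(A_*)^\perp$, and a Lemma~\ref{lem3}-style extraction yields a left unit $f$ of $(A_*)^\perp$ lying in $(A_*)^\perp$ itself. One then plays this against the fact that the BAI $(u_\alpha)$ of $A^{**}$ converges to $e$ in norm (since $u_\alpha=u_\alpha\cdot e\to e$ by the BAI property), so that the non-trivial ideal $(A_*)^\perp$ cannot carry its own identity without violating the uniqueness of the identity of the unital algebra $A^{**}$, forcing $(A_*)^\perp=\{0\}$.

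The main obstacle is closing the contradiction cleanly: extracting an identity in $(A_*)^\perp$ from amenability is standard, but turning its existence into an outright contradiction demands the delicate interplay of (a) left strong Arens irregularity $Z_1(A^{**})=A$, which forbids any nonzero element of $(A_*)^\perp$ from lying in $Z_1$, (b) the uniqueness of the two-sided identity $e\in A$ of $A^{**}$, and (c) the separately weak$^*$-weak$^*$ continuous multiplication on the dual Banach algebra $A$. The technical heart of the proof lies in aligning these three conditions so that a nonzero $f\in(A_*)^\perp$ acting as identity on $(A_*)^\perp$ becomes incompatible with the unital structure already fixed by $e\in A$.
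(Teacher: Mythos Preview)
For part (i), your approach matches the paper's: extract a left unit $e''$ of $A^{**}$ from the BAI via Lemma~\ref{lem3}, observe that left multiplication by $e''$ is the identity map (hence weak$^*$-weak$^*$ continuous), and conclude $e'' \in Z_1(A^{**}) = A$. Your ``symmetric argument'' for the right identity has a small gap, though: passing to the second Arens product would require $Z_2(A^{**}) = A$, which is not assumed. The paper simply asserts that $e''$ is a two-sided identity; a clean way to close this is to note that since the BAI $(e''_\alpha)$ of $A^{**}$ is two-sided and $e''$ is already a left unit, $e''_\alpha = e'' \cdot e''_\alpha \to e''$ in norm, whence $e''$ is also a right unit.

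For part (ii), your overall plan---decompose $A^{**} = A \oplus (A_*)^\perp$, show $(A_*)^\perp$ is a closed ideal, use amenability to extract an identity $f \in (A_*)^\perp$, and then argue $f = 0$---is exactly the paper's. The genuine gap is in the final step. Your proposed contradiction via ``uniqueness of the identity of $A^{**}$'' does not work: an ideal of a unital algebra can perfectly well carry its own identity without any conflict (think of a direct sum $B \oplus C$ of unital algebras, where $B$ is an ideal with identity $(1_B,0)$). The paper's decisive move, which you have not located, is to show that $f$ commutes with every $a'' \in A^{**}$: since $(A_*)^\perp$ is an ideal and $f$ is its two-sided identity, both $f \cdot a''$ and $a'' \cdot f$ lie in $(A_*)^\perp$, so
\[
f \cdot a'' = (f \cdot a'') \cdot f = f \cdot (a'' \cdot f) = a'' \cdot f.
\]
From this the paper concludes $f \in Z_1(A^{**}) = A$, and since $A \cap (A_*)^\perp = \{0\}$, this forces $f = 0$ and hence $(A_*)^\perp = \{0\}$. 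You correctly note in your condition (a) that no nonzero element of $(A_*)^\perp$ can lie in $Z_1(A^{**})$; what is missing is precisely the mechanism---centrality of $f$ via the ideal property---that places $f$ there.
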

\begin{proof}
	(i) Amenability of $A^{**}$ implies that it has a BAI.  By using Lemma \ref{lem3}, $A^{**}$ has an identity say that $e^{\prime\prime}$. So, the mapping $x^{\prime\prime}\rightarrow  e^{\prime\prime}. x^{\prime\prime}=x^{\prime\prime}$ is weak$^*$-to-weak$^*$ continuous from $A^{**}$ into $A^{**}$. It follows that $e^{\prime\prime}\in Z_1(A^{**})=A$. This means that $A$ has an identity.
	
	(ii) Assume that $E$ is a predual of $A$. Then we have $A^{**}=A\oplus E^\bot$. Since $A^{**}$ is amenable, by  \cite[Theorem 1.8]{f.ghahramani} or \cite[Theorem 2.3]{G},  $A$ is amenable, and so $E^\bot$ is amenable. Thus $E^\bot$ has a $BAI$ such as $(e^{\prime\prime}_\alpha)_\alpha\subseteq E^\bot$. Since $E^\bot$ is a closed and weak$^*$-closed subspace of $A^{**}$, without loss generality,  there is $e^{\prime\prime}\in E^\bot$ such that
	$$e^{\prime\prime}_\alpha\stackrel{w^*} {\longrightarrow}e^{\prime\prime}\qquad \text{and}\qquad e^{\prime\prime}_\alpha\stackrel{\|\cdot\|} {\longrightarrow}e^{\prime\prime}.$$
	
	Then   $e^{\prime\prime}$ is a left identity for  $E^\bot$. On the other hand, for every $x^{\prime\prime}\in E^\bot$, since $E^\bot$ is an ideal in $A^{**}$, we have $x^{\prime\prime}.e^{\prime\prime}\in E^\bot$. Thus, for every $a^\prime\in A^*$,
	\begin{align*}
	\langle x^{\prime\prime}.e^{\prime\prime},a^\prime\rangle &=\lim_\alpha
	\langle (x^{\prime\prime}.e^{\prime\prime}).e^{\prime\prime}_\alpha,a^\prime\rangle=\lim_\alpha
	\langle x^{\prime\prime}.(e^{\prime\prime}.e^{\prime\prime}_\alpha),a^\prime\rangle\\
	&=\lim_\alpha
	\langle x^{\prime\prime}.e^{\prime\prime}_\alpha,a^\prime\rangle= \langle x^{\prime\prime},a^\prime\rangle.
	\end{align*}
	
	It follows that $x^{\prime\prime}.e^{\prime\prime}=x^{\prime\prime}$, and so  $e^{\prime\prime}$ is a right identity for $E^\bot$. Consequently, $e^{\prime\prime}$ is a two-sided identity for $E^\bot$.
	Now, let $a^{\prime\prime}\in A^{**}$. Then
	$$e^{\prime\prime}.a^{\prime\prime}= (e^{\prime\prime}.a^{\prime\prime}).e^{\prime\prime}=
	e^{\prime\prime}.(a^{\prime\prime}.e^{\prime\prime})=a^{\prime\prime}.e^{\prime\prime}.$$
	
	Hence $e^{\prime\prime}\in Z_1(A^{**})=A$. It follows that  $e^{\prime\prime}=0$, and so $E^\bot=0$. This implies that $A^{**}=A$.
\end{proof}
\begin{example}
	Let $G$ be a locally compact group. If  $M(G)^{**}$ is amenable, then by Theorem \ref{t44}(ii), because $C_0(G)^*=M(G)$, we conclude that $M(G)$ is reflexive. This means that $G$ is a finite group, moreover see \cite[Corollary 1.4]{f.ghahramani}.
\end{example}


\bigskip
\noindent
{\bf Acknowledgment.}
 We would like to thank the referee for her/his careful reading of our paper and many valuable suggestions.

\bibliographystyle{amsplain}

\vspace{0.1in}
\hrule width \hsize \kern 1mm
\end{document}